\documentclass{cimart}
\usepackage{xcolor, amstext, amsfonts, amssymb, amsbsy, latexsym}
\usepackage{xy, hhline}
\usepackage{soul}
\usepackage{longtable}
\usepackage{xcolor}
\usepackage{tkz-graph}
\usepackage{url}
\usepackage{float}
\usepackage{tasks}
\usepackage{array}
\usepackage{eurosym}
\usepackage{geometry}
\usepackage{caption,booktabs}
\captionsetup{
  justification = centering
}

\usepackage{tikz}
\usepackage{cite}
\usepackage{hyperref}

\usepackage[symbol]{footmisc}

\usepackage{verbatim}

\setcounter{MaxMatrixCols}{12}
 


\theoremstyle{plain}
 
\newtheorem*{theoremA}{Theorem A}
\newtheorem*{theoremB}{Theorem B}
\newtheorem*{theoremC}{Theorem C}

\title[The geometric classification of nilpotent algebras]{
    The geometric classification of nilpotent  Lie--Yamaguti, Bol
  and compatible Lie algebras\footnote{The first part of the work is supported by FCT UIDB/MAT/00212/2023, UIDP/MAT/00212/2023, and 2023.08952.CEECIND. The second part of this work is supported by grant F-FA-2021-423, the Ministry of Higher Education, Science and Innovations of the Republic of Uzbekistan.}}

\author{
    Kobiljon Abdurasulov, Abror Khudoyberdiyev and Feruza Toshtemirova
    }

\authorinfo[Kobiljon Abdurasulov]{CMA-UBI, University of  Beira Interior, Covilh\~{a}, Portugal;  \  Institute of Mathematics, Academy of Sciences of Uzbekistan, Tashkent, Uzbekistan}{abdurasulov0505@mail.ru}

\authorinfo[Abror Khudoyberdiyev]{Institute of Mathematics, Academy of Sciences of Uzbekistan, Tashkent, Uzbekistan; \ National University of Uzbekistan, Tashkent, Uzbekistan}{khabror@mail.ru}

\authorinfo[Feruza Toshtemirova]{Chirchik State Pedagogical University, Chirchik, Uzbekistan}{feruzaisrailova45@gmail.com}

\abstract{The  geometric classifications of complex $4$-dimensional nilpotent Lie--Yamaguti algebras, $4$-dimensional nilpotent Bol algebras, and $4$-dimensional nilpotent compatible Lie algebras are given.}

\keywords{Lie--Yamaguti algebra, Bol algebra, compatible Lie algebra, nilpotent algebra, rigid algebra, irreducible components, geometric classification.}

\msc{17A30 (primary); 17B63, 14L30 (secondary).}

\VOLUME{33}
\YEAR{2025}
\ISSUE{3}
\NUMBER{10}
\DOI{https://doi.org/10.46298/cm.15810}

\begin{document}

\section*{Introduction}

The study of algebras endowed with two or more multiplication operations is not a recent development; nevertheless, it continues to attract considerable attention within the mathematical community due to its theoretical richness and potential applications \cite{Khr, aae23, b,MF,kms}.
The class of Poisson algebras is the most well-known example of algebras featuring two multiplication operations.

The notion of the Lie triple system was formally introduced as an algebraic object by  Jacobson in connection with problems arising from quantum mechanics. The example of a Lie triple system that arose from quantum mechanics is known as the  Meson field, as described by  Jacobson \cite{Jac}. The notion of Lie--Yamaguti algebras is a  generalization of Lie triple systems and Lie algebras, derived from Nomizu’s work on the invariant affine connections on homogeneous spaces in the 1950s \cite{Nomizu}. The notion of a Lie--Yamaguti algebra is a natural abstraction made by Yamaguti of Nomizu’s considerations. Yamaguti called these systems general Lie triple systems \cite{LYI}.  Kikkawa renamed the notion of a general Lie triple system as Lie triple algebra and showed how to
``integrate'' these algebras to nonassociative multiplications on reductive homogeneous spaces \cite{Kik}.
Kinyon and Weinstein observed that Lie triple
algebras, which they called ``Lie--Yamaguti algebras'' in their paper, can be constructed from Leibniz algebras \cite{LY}.
The interplay between their binary and ternary operations reflects the geometric and algebraic intricacies of the spaces from which they originate, making Lie--Yamaguti algebras a significant object of study in both mathematics and theoretical physics. Lie--Yamaguti algebras have been studied by several authors in \cite{Sagle1, Sagle2, Guo, Goswami, Zhang, LYI,  LYC}. In particular,  Sagle constructed some remarkable examples of Lie--Yamaguti algebras arising from reductive homogeneous spaces in differential geometry. In \cite{Zhang}, the deformation and extension theory of Lie--Yamaguti algebras is studied, and the notion of Nijenhuis operators for Lie--Yamaguti algebras is introduced to describe trivial deformations. The algebraic classification of the nilpotent Lie--Yamaguti algebras up to dimension four was given in \cite{LYC}.

In 1937,   Bol introduced the notion of a left (right) Bol loop \cite{Boldef}, as a loop satisfying the identity $a(b(ac)) = (a(ba))c$, (resp. $((ca)b)a = c((ab)a))$. Recall that a quasigroup is a non-empty set with a binary operation such that each element occurs exactly once in each row and exactly once in each column of the quasigroup multiplication table, and a quasigroup with an identity element is called a loop. The class of Bol algebras plays a similar role concerning Bol loops, as Lie algebras play concerning Lie groups or Malcev
algebras concerning Moufang loops \cite{Maufang}. A connection of the Bol algebras with right-alternative algebras was established in \cite{Mikheev}, and it was demonstrated that the commutator algebra of an arbitrary right-alternative algebra is a Bol algebra. Filippov
classified homogeneous Bol algebras and studied the relation between Bol algebras and Malcev algebras in \cite{Fill}.
Kuz'min and Za\v{\i}di investigated the solvability and semisimplicity of Bol algebras \cite{Kuzmin}. P\'{e}rez--Izquierdo constructed an envelope for Bol algebras and proved that any Bol algebra is located inside the generalized left alternative nucleus of the envelope \cite{Perez}.
Special identities for Bol algebras are investigated in \cite{Irvin}, and it was shown that there are no special identities of degree less than $8$. All the special identities of degree $8$ in the partition six-two were obtained.

A compatible Lie algebra is a pair of Lie algebras such that any linear combination of these two Lie brackets is still a Lie bracket.
Such structures are closely related to the (infinitesimal) deformations of Lie algebras. Compatible Lie algebras are considered in many fields in mathematics and mathematical physics, such as the study of the classical Yang--Baxter equation \cite{ComL1}, integrable equations of the principal chiral model type \cite{ComL2}, elliptic theta functions \cite{Coml3}, and loop algebras over Lie algebras \cite{ComL4}.
Compatible Lie algebras have a natural one-to-one correspondence to compatible linear Poisson structures, whereas the latter are important Poisson structures related to bi-Hamiltonian structures. Recall that a bi-Hamiltonian structure is a pair of Poisson structures on a manifold which are compatible, i.e., their sum is again a Poisson structure \cite{Kosmann}.

Degenerations and geometric properties of a variety of algebras have been an object of study since 1970's. Gabriel~\cite{gabriel} described the irreducible components of the variety of $4$-dimensional unital associative algebras. Mazzola~\cite{maz79} classified, both algebraically and geometrically, the variety of unital associative algebras of dimension $5$.  Burde and Steinhoff~\cite{BC99} constructed the graphs of degenerations for the varieties of $3$-dimensional and $4$-dimensional Lie algebras over $\mathbb{C}$. Grunewald and O'Halloran~\cite{GRH} calculated the degenerations for the nilpotent Lie algebras of dimension up to $5$.

The study of degenerations of algebras is very rich and closely related to deformation theory. It offers an insightful geometric perspective on the subject and has been the object of a lot of research.
One of the main problems of the geometric classification of a variety of algebras is the description of its irreducible components. In the case of finitely many orbits (i.e., isomorphism classes), the irreducible components are determined by the rigid algebras --- algebras whose orbit closure is an irreducible component of the variety under consideration.
There are many results related to the deformation and geometric classification of algebras in different varieties of
associative and non-associative algebras, superalgebras and Poisson algebras, (see \cite{fkkv, afm, BC99, b, lsb20, FKS, k23, l24, MS} and references in \cite{k23, MS}).




In the present paper, we continue the study of the geometric classification of algebras and describe all irreducible components of the variety of four-dimensional nilpotent Lie--Yamaguti algebras, four-dimensional nilpotent Bol algebras, three- and four-dimensional nilpotent compatible Lie algebras. Our main results are summarized below.

\begin{theoremA} The variety of complex four-dimensional nilpotent Lie--Yamaguti algebras has dimension $14$.
It is defined by one rigid algebra and two $1$-parametric families of algebras and can be described as the closure of the union of $\mathrm{GL}_4(\mathbb{C})$-orbits of the algebras given in Theorem \ref{geo1}.
\end{theoremA}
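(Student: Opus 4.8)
The plan is to prove Theorem A by the standard two-part strategy for geometric classification: first establish the algebraic classification of four-dimensional nilpotent Lie-Yamaguti algebras (which is available from \cite{LYC}), and then determine the degeneration relations among these isomorphism classes to extract the irreducible components. Since the statement asserts that the variety is defined by one rigid algebra together with two one-parametric families, the components arise not only from isolated rigid algebras but also from the closures of families, so the orbit space is \emph{not} finite and the naive ``rigid algebras only'' criterion must be supplemented.

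\medskip

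First I would set up the variety $\mathcal{N}$ of four-dimensional nilpotent Lie-Yamaguti algebras as a Zariski-closed subset of the affine space of structure constants, cut out by the defining polynomial identities (the ternary Jacobi-type identities together with the compatibility relations between the binary and ternary operations) and the nilpotency conditions. The group $\mathrm{GL}_4(\mathbb{C})$ acts by change of basis, and isomorphism classes correspond to orbits. I would import the complete list of isomorphism classes from the algebraic classification, recording for each algebra the dimension of its orbit, computed as $16 - \dim\mathfrak{Der}$ where $\mathfrak{Der}$ is the derivation (or more precisely the relevant stabilizer) algebra; this orbit-dimension count is what forces the total dimension $14$ of the variety and identifies the top-dimensional candidates.

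\medskip

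The core step is to build the degeneration graph. For each pair of algebras $A$ and $B$ I would decide whether $A \to B$ (i.e.\ $B$ lies in the orbit closure of $A$) by producing an explicit parametrized family $g(t) \in \mathrm{GL}_4(\mathbb{C}(t))$ with $\lim_{t\to 0} g(t)\cdot A = B$ to establish positive degenerations, and by invoking closed invariant conditions (dimensions of derivation algebras, ranks and dimensions of images of the multiplication maps, and the standard list of ``degeneration invariants'') to rule out the impossible ones. Having assembled this graph, the irreducible components are exactly the maximal elements: the algebras and families whose orbit closures are not contained in the closure of any other orbit. I would verify that precisely one rigid algebra and two one-parameter families are maximal, check that the closures of the two families are each irreducible of the claimed dimension (a family parametrized by an irreducible curve has irreducible closure), and confirm that the union of these three closures exhausts $\mathcal{N}$, giving dimension $14$.

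\medskip

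The hard part will be the systematic verification of the \emph{non}-degenerations, i.e.\ proving that no spurious containments collapse the three candidate components into fewer, and in particular showing that the two parametric families genuinely give irreducible components rather than being swallowed by the closure of the rigid algebra or of each other. This requires a carefully chosen battery of semicontinuous invariants sufficient to separate the relevant orbit closures, and special care near the parameter values where a family may specialize or where two families could intersect; handling the one-parameter families (as opposed to isolated rigid orbits) is exactly where the argument departs from the finite-orbit case and where the bulk of the technical work lies.
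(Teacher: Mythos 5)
Your plan coincides with the paper's own proof strategy: the authors import the algebraic classification from \cite{LYC}, check the orbit-closure dimensions $\dim O(\mathcal{L}_{19}^{\alpha})=\dim O(\mathcal{L}_{29})=\dim O(\mathcal{L}_{44}^{\alpha})=14$, and then exhibit an explicit parametrized basis degenerating one of these three algebras/families onto every other algebra in the list, so that the variety is the union of the three (irreducible) closures. The only divergence is one of emphasis: you correctly flag the mutual non-degenerations among the three candidates as requiring invariant-based arguments, whereas the paper disposes of this essentially via the equality of the three dimensions (explicit closed invariant sets of the kind you describe appear only in the Bol case, Theorem \ref{geo2}).
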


\begin{theoremB} The variety of complex four-dimensional nilpotent Bol algebras has dimension $15$.
It is defined by one rigid algebra and one $1$-parametric family of algebras and can be described as the closure of the union of $\mathrm{GL}_4(\mathbb{C})$-orbits of the algebras given in Theorem \ref{geo2}.
\end{theoremB}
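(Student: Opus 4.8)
The plan is to read off the geometric picture from the algebraic classification of four-dimensional nilpotent Bol algebras, viewing the collection of such algebras as a $\mathrm{GL}_4(\mathbb{C})$-invariant Zariski-closed subset of the affine space of structure constants: the structure constants of a nilpotent Bol algebra must satisfy a fixed finite system of polynomial identities coming from the binary and ternary operations together with the nilpotency condition, so this locus is genuinely closed. Every isomorphism class is a single $\mathrm{GL}_4(\mathbb{C})$-orbit, and the irreducible components are the maximal orbit closures. Because the classification contains continuous families, I expect the components to be of two kinds: closures of isolated orbits of rigid algebras, and closures of the unions of orbits swept out by a one-parameter family. The whole argument parallels the scheme already used for Theorem A.

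First I would compute, for every algebra on the list, the dimension of its orbit via $\dim\mathcal{O}(A)=16-\dim\mathrm{Der}(A)$, where $\mathrm{Der}(A)$ is the space of derivations respecting \emph{both} the binary and the ternary product. This singles out the candidate top-dimensional orbits. For the one-parameter family the relevant number is the dimension of the generic orbit increased by one, which should come out to $15$, while for the distinguished rigid algebra the orbit dimension should be maximal among the isolated orbits. The dimension of the variety is then the maximum of these numbers, which the theorem asserts to be $15$.

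Next I would assemble the degeneration (Hasse) diagram. The positive direction is handled by exhibiting, for each claimed degeneration $A\to B$, an explicit one-parameter family $g_t\in\mathrm{GL}_4(\mathbb{C}(t))$ with $\lim_{t\to 0} g_t\ast\mu_A=\mu_B$, and checking that the limit exists and equals the target structure constants. The negative direction — proving that the rigid algebra and the family give genuinely distinct components, that neither lies in the orbit closure of the other, and that no further algebra dominates them — is carried out with degeneration invariants: the strict drop $\dim\mathcal{O}(B)<\dim\mathcal{O}(A)$ together with $\mathrm{GL}_4(\mathbb{C})$-invariant closed conditions such as the dimensions of $A^2$, of the annihilator, of the subspaces cut out by the trilinear bracket, and the ranks of the associated bilinear and trilinear forms, each of which behaves semicontinuously along a degeneration.

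The main obstacle I expect is exactly these non-degeneration arguments. Since a Bol algebra carries both a binary and a ternary operation, the standard Lie-algebra invariants are not fine enough to separate all orbits, so I anticipate having to design tailored closed invariants built from the interaction of the two products, and for a few borderline pairs to fall back on the direct method: assume a hypothetical degenerating $g_t$ and derive a contradiction from the orders of vanishing of its entries. The final consistency check is to verify that the closure of the one-parameter family is irreducible of dimension exactly $15$ — rather than splitting into smaller pieces or collapsing onto an isolated orbit — which confirms the asserted count of one rigid algebra and one $1$-parametric family.
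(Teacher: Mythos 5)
Your proposal is correct and takes essentially the same route as the paper: compute orbit dimensions via $\dim O(A)=16-\dim\mathfrak{Der}(A)$, exhibit explicit parametrized bases for all degenerations from the family and from the rigid algebra, and settle the one crucial non-degeneration (family $\not\to$ rigid algebra) by a closed semicontinuous condition. Indeed, your suggested invariant built from the trilinear bracket suffices here, since the ternary products of every member of the $1$-parametric family span a $1$-dimensional space while those of the rigid algebra span a $2$-dimensional one; the paper instead invokes its lemma on Zariski-closed sets stable under triangular matrices, but the two arguments are interchangeable.
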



\begin{theoremC}
The variety of complex four-dimensional nilpotent compatible Lie algebras has dimension $13$.
It is defined by one $1$-parametric family of algebras and can be described as the closure of the union of $\mathrm{GL}_4(\mathbb{C})$-orbits of the algebras given in Theorem \ref{geo4}.
\end{theoremC}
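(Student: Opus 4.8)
The plan is to apply the standard machinery of geometric classification to the affine variety $\mathbf{CL}$ of $4$-dimensional nilpotent compatible Lie algebras. A point of $\mathbf{CL}$ is a pair $(\mu,\nu)$ of skew-symmetric bilinear maps $\mathbb{C}^4\times\mathbb{C}^4\to\mathbb{C}^4$, each obeying the Jacobi identity and satisfying the compatibility condition, so that every linear combination $a\mu+b\nu$ is again a Lie bracket; since nilpotency is a closed condition, $\mathbf{CL}$ is a Zariski-closed, $\mathrm{GL}_4(\mathbb{C})$-invariant subset of the $48$-dimensional space of such pairs. The irreducible components of $\mathbf{CL}$ are its maximal orbit closures, and $\dim\mathbf{CL}$ is the largest of their dimensions. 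Because the algebraic classification underlying Theorem~\ref{geo4} provides finitely many isomorphism classes together with a single one-parameter family $A(\alpha)$, the task reduces to showing that the orbit closure of this family exhausts the whole variety and has dimension $13$.

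First I would compute, for each isomorphism class $A$ in the list, the orbit dimension via $\dim\mathrm{Orb}(A)=16-\dim\mathrm{Der}(A)$, where $\mathrm{Der}(A)$ denotes the space of simultaneous derivations of the two brackets; this is a finite linear-algebra computation, solving a homogeneous linear system in the $16$ entries of a generic endomorphism of $\mathbb{C}^4$. Ranking the classes by orbit dimension locates the candidate top strata. For the family $A(\alpha)$ I would compute the dimension of $C:=\overline{\bigcup_\alpha\mathrm{Orb}(A(\alpha))}$; provided the orbits $\mathrm{Orb}(A(\alpha))$ are pairwise distinct for generic $\alpha$ and of constant dimension $12$, this equals $12+1=13$, which is the expected source of the claimed dimension.

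Next I would assemble the degeneration picture. To prove that $C$ is the whole variety, I would exhibit for every other isomorphism class $B$ an explicit degeneration $A(\alpha)\rightsquigarrow B$, i.e. a parametrized basis change $g_t\in\mathrm{GL}_4(\mathbb{C}(t))$, with $\alpha$ possibly allowed to depend on $t$, such that $g_t\cdot A(\alpha)\to B$ as $t\to0$. Wherever a candidate degeneration must be ruled out, I would invoke upper/lower semicontinuous $\mathrm{GL}_4$-invariants — the dimensions of the derivation algebra, of the centre, of the derived spaces $[\mathbb{C}^4,\mathbb{C}^4]$ and $\{\mathbb{C}^4,\mathbb{C}^4\}$, and of the various characteristic ideals — together with the basic inequality that $A\rightsquigarrow B$ forces $\dim\mathrm{Orb}(A)\ge\dim\mathrm{Orb}(B)$. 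Once every class is shown to lie in $C$, the inclusions $\mathbf{CL}=\bigcup_A\mathrm{Orb}(A)\subseteq C\subseteq\mathbf{CL}$ give $\mathbf{CL}=C$; and since $C$ is the closure of the image of the irreducible variety $\mathbb{C}\times\mathrm{GL}_4$ under a morphism, it is irreducible. Hence $\mathbf{CL}$ consists of a single component of dimension $13$, and in particular no sporadic class is rigid.

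The step I expect to be the main obstacle is the construction of the degenerations from the family rather than the (routine) orbit-dimension bookkeeping: for the more degenerate classes the naive specialization $\alpha\mapsto\alpha_0$ will not reach $B$, forcing the parameter to drift with $t$ (or tend to infinity), which makes the choice of the $g_t$ delicate and the limit computation error-prone. A secondary difficulty is making the dimension count rigorous: one must verify that distinct generic values of $\alpha$ genuinely yield distinct orbits — otherwise the family collapses and the dimension drops below $13$ — which I would confirm by producing an invariant separating the generic members, and that the generic orbit dimension is exactly $12$, which pins $\dim\mathbf{CL}=13$ and closes the argument.
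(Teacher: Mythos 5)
Your proposal is correct and follows essentially the same strategy as the paper: the authors likewise compute $\dim O(L_{10}^{\alpha})=13$ and then exhibit explicit parametrized bases (with the parameter drifting as $\alpha=f(t)$, e.g.\ $L_{10}^{t^{-1}}\to L_{01}$) degenerating the family $L_{10}^{\alpha}$ to every other algebra in the classification, which yields a single irreducible component with no non-degeneration arguments needed. The difficulty you anticipated --- letting $\alpha$ depend on $t$ to reach the more degenerate classes --- is exactly the device used in the paper's degeneration table.
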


\section{Preliminaries: the algebraic classification}

All the algebras below will be over $\mathbb C$ and all the linear maps will be $\mathbb C$-linear.
For simplicity, every time we write the multiplication table of an algebra
the products of basis elements whose values are zero or can be recovered from the commutativity  or from the anticommutativity are omitted.
The notion of a nontrivial algebra means that the  multiplication is nonzero.
In this section, we present the necessary concepts related to Lie--Yamaguti algebras, Bol algebras, and compatible Lie algebras. We present the algebraic classification of four-dimensional nilpotent Lie--Yamaguti algebras, four-dimensional nilpotent Bol algebras.

\begin{definition} Let $\mathcal{L}$ be an algebra with one bilinear multiplication $[\cdot, \cdot]$
     and one trilinear multiplication $[\cdot, \cdot,\cdot]$
     $\big($resp., two bilinear multiplications $[\cdot,\cdot]$ and  $\{\cdot,\cdot\} \big)$. Then
     $\mathfrak D$ is a derivation if it satisfies
\begin{center}
${\mathfrak D}[x,y]=[{\mathfrak D}(x),y]+[x,{\mathfrak D}(y)],\ {\mathfrak D}[x,y,z]=[{\mathfrak D}(x),y,z]+[x,{\mathfrak D}(y),z]+[x,y,{\mathfrak D}(z)].$\\
$\big($resp.,
${\mathfrak D}[x,y]=[{\mathfrak D}(x),y]+[x,{\mathfrak D}(y)],\
{\mathfrak D}\{x,y\}=\{{\mathfrak D}(x),y\}+\{x,{\mathfrak D}(y)\} \big).$
\end{center}
\end{definition}

We denote the set of all derivations of the algebra $\mathcal{L}$ by $\mathfrak{Der}(\mathcal{L})$.

\begin{definition}
     Let $\mathcal{L}$ be an algebra with one bilinear multiplication $[\cdot, \cdot]$
     and one trilinear multiplication $[\cdot, \cdot,\cdot]$   $\big($resp., two bilinear multiplications $[\cdot,\cdot]$ and  $\{\cdot,\cdot\} \big)$. Then
       $\mathcal{L}$ is nilpotent if $\mathcal{L}^{(m)}= 0$, for some $m\geq 2$, where
\begin{center}$\mathcal{L}^{(1)}:= \mathcal{L}, \quad
 \mathcal{L}^{(n)} := \sum\limits_{i+j=n} [\mathcal{L}^{(i)},\mathcal{L}^{(j)}] + \sum\limits_{i+j+k=n+1}[\mathcal{L}^{(i)},\mathcal{L}^{(j)},\mathcal{L}^{(k)}], \
 i,j,k\geq1, n \geq 2.$\\
$\big($resp.,
$\mathcal{L}^{(1)}:= \mathcal{L}, \quad
 \mathcal{L}^{(n)} := \sum\limits_{i+j=n} [\mathcal{L}^{(i)},\mathcal{L}^{(j)}] + \sum\limits_{i+j=n}\{\mathcal{L}^{(i)},\mathcal{L}^{(j)}\}, \
 i,j\geq1, n \geq 2 \big). $\\
 \end{center}

\end{definition}

\begin{definition}[see \cite{LY}] A Lie--Yamaguti algebra is a vector space $\mathcal{L}$  with a bilinear multiplication $[-,-]$ and a trilinear multiplication
$[-,-,-]$ satisfying:

$\begin{array}{ll}
({\rm LY1}) &  [x,y]+[y,x]=0, \\
({\rm LY2}) &  [x,y,z]+[y,x,z]=0, \\
({\rm LY3}) &  [x,y,z]+[y,z,x]+[z,x,y]+[[x,y],z]+[[y,z],x]+[[z,x],y]=0, \\
({\rm LY4}) &  [[x,y],z,u]+[[y,z],x,u]+[[z,x],y,u]=0, \\
({\rm LY5}) &  [x,y,[u,v]]-[[x,y,u],v]-[u,[x,y,v]]=0, \\
({\rm LY6}) &  [u,v,[x,y,z]]-[[u,v,x],y,z]-[x,[u,v,y],z]-[x,y,[u,v,z]]=0.\\
\end{array}$

\end{definition}

\begin{example}

The Lie--Yamaguti algebra with $[x,y]=0$ for any $x, y \in \mathcal{L}$ is exactly
a Lie triple system, closely related to symmetric spaces, while the Lie--Yamaguti
algebra with $[x,y,z]=0$ for any $x, y, z \in \mathcal{L}$ is a Lie algebra.
\end{example}

\begin{theorem}[see \cite{LYC}]
Let $\mathcal{L}$ be a $4$-dimensional complex nilpotent
Lie--Yamaguti algebra. Then $\mathcal{L}$ is isomorphic to one of the following pairwise non-isomorphic algebras:

{\small
\setlength{\tabcolsep}{2pt} 
\begin{longtable}{@{}llllllllll@{}} 

$\mathcal{L}_{01}$ &$:$& $[e_1,e_2]=e_3,$ & $\mbox{(Lie algebra)}.$\\

$\mathcal{L}_{02}$ & $:$&  $[e_1,e_2,e_1]=e_3.$ \\

$\mathcal{L}_{03}$ &$:$& $[e_1,e_2]=e_3,$ & $[e_1,e_2,e_1]=e_3.$\\

$\mathcal{L}_{04}$ &$:$& $[e_1,e_3]=e_4,$ & $[e_2,e_3,e_2]=e_4.$\\

$\mathcal{L}_{05}$ &$:$& $[e_1,e_2]=e_4,$ & $[e_2,e_3,e_2]=e_4.$\\

$\mathcal{L}_{06}$ &$:$& $[e_2,e_3,e_2]=e_4,$ & $[e_3,e_1,e_3]=e_4$.\\

$\mathcal{L}_{07}$ &$:$& $[e_1,e_2]=e_4,$   & $[e_2,e_3,e_2]=e_4,$ & $[e_3,e_1,e_3]=e_4.$\\

$\mathcal{L}_{08}$ &$:$& $[e_1,e_3]=e_4,$   & $[e_2,e_3,e_2]=e_4,$ & $[e_3,e_1,e_3]=e_4.$\\

$\mathcal{L}_{09}$ &$:$& $[e_2,e_3]=e_4,$   & $[e_2,e_3,e_2]=e_4,$ & $[e_3,e_1,e_3]=e_4.$\\

$\mathcal{L}_{10}$ &$:$& $[e_2,e_3,e_1]=e_4,$   & $[e_3,e_1,e_2]=e_4,$ & $[e_2,e_3,e_2]=e_4.$ & $[e_2,e_1,e_3]=2e_4.$\\

$\mathcal{L}_{11}$ &$:$& $[e_1,e_3]=e_4,$   & $[e_2,e_3,e_1]=e_4,$   & $[e_3,e_1,e_2]=e_4,$\\
& & $[e_2,e_3,e_2]=e_4.$ & $[e_2,e_1,e_3]=2e_4.$\\

$\mathcal{L}_{12}$ &$:$& $[e_2,e_3]=e_4,$   & $[e_2,e_3,e_1]=e_4,$   & $[e_3,e_1,e_2]=e_4,$\\
& & $[e_2,e_3,e_2]=e_4.$ & $[e_2,e_1,e_3]=2e_4.$\\

$\mathcal{L}_{13}$ &$:$& $[e_1,e_2]=e_4,$   & $[e_2,e_3,e_1]=e_4,$   & $[e_3,e_1,e_2]=e_4,$ \\
& & $[e_2,e_3,e_2]=e_4.$ & $[e_2,e_1,e_3]=2e_4.$\\

$\mathcal{L}_{14}$ &$:$& $[e_1,e_2]=e_4,$   & $[e_2,e_3]=e_4,$   & $[e_2,e_3,e_1]=e_4,$   & $[e_3,e_1,e_2]=e_4,$ \\
& & $[e_2,e_3,e_2]=e_4.$ & $[e_2,e_1,e_3]=2e_4.$\\

$\mathcal{L}_{15}$ &$:$& $[e_1,e_2]=e_4,$   & $[e_1,e_3]=e_4,$   & $[e_2,e_3,e_1]=e_4,$   & $[e_3,e_1,e_2]=e_4,$ \\
& & $[e_2,e_3,e_2]=e_4,$ & $[e_2,e_1,e_3]=2e_4.$\\

$\mathcal{L}_{16}^{\alpha}$ &$:$&  $[e_2,e_3,e_1]=\alpha e_4,$  & $[e_1,e_2,e_3]=-(1+\alpha)e_4,$   & $[e_3,e_1,e_2]=e_4.$\\

$\mathcal{L}_{17}^{\alpha}$ &$:$& $[e_1,e_2]=e_4,$   & $[e_1,e_2,e_3]=-(1+\alpha)e_4,$   & $[e_2,e_3,e_1]=\alpha e_4,$  & $[e_3,e_1,e_2]=e_4.$\\

$\mathcal{L}_{18}^{\alpha}$ &$:$& $[e_1,e_2]=e_4,$   & $[e_1,e_3]=e_4,$   & $[e_1,e_2,e_3]=-(1+\alpha)e_4,$ \\
& & $[e_2,e_3,e_1]=\alpha e_4,$  & $[e_3,e_1,e_2]=e_4.$\\

$\mathcal{L}_{19}^{\alpha}$ &$:$& $[e_1,e_2]=e_4,$   & $[e_1,e_3]=e_4,$   & $[e_2,e_3]=e_4,$   & $[e_3,e_1,e_2]=e_4.$ \\
& & $[e_2,e_3,e_1]=\alpha e_4,$  & $[e_1,e_2,e_3]=-(1+\alpha)e_4,$ \\

$\mathcal{L}_{20}^{\alpha}$ &$:$& $[e_1,e_2]=e_3,$   & $[e_1,e_3]=e_4,$   &  $[e_1,e_2,e_1]=\alpha e_4.$ \\

$\mathcal{L}_{21}$ &$:$& $[e_1,e_2]=e_3,$   & $[e_2,e_3]=e_4,$   &  $[e_1,e_2,e_1]=e_4.$ \\

$\mathcal{L}_{22}$ &$:$& $[e_1,e_2]=e_3,$   &   $[e_2,e_3,e_2]=e_4.$ \\

$\mathcal{L}_{23}^{\alpha}$ &$:$& $[e_1,e_2]=e_3,$   & $[e_1,e_3]=\alpha e_4,$   & $[e_1,e_2,e_1]=e_4,$  & $[e_2,e_3,e_2]=e_4.$\\

$\mathcal{L}_{24}$ &$:$& $[e_1,e_2]=e_3,$   & $[e_2,e_3]=e_4,$   & $[e_1,e_2,e_1]=e_4,$  & $[e_2,e_3,e_2]=e_4.$\\

$\mathcal{L}_{25}$ &$:$& $[e_1,e_2]=e_3,$   & $[e_1,e_3]=e_4,$   &  $[e_2,e_3,e_2]=e_4.$\\

$\mathcal{L}_{26}$ &$:$& $[e_1,e_2]=e_3,$   & $[e_2,e_3]=e_4,$   &  $[e_2,e_3,e_2]=e_4.$\\

$\mathcal{L}_{27}$ &$:$& $[e_1,e_2]=e_3,$   & $[e_1,e_3,e_2]=e_4,$   &  $[e_2,e_3,e_1]=e_4.$\\

$\mathcal{L}_{28}$ &$:$& $[e_1,e_2]=e_3,$   & $[e_1,e_3]=e_4,$   &$[e_1,e_3,e_2]=e_4,$   &  $[e_2,e_3,e_1]=e_4.$\\

$\mathcal{L}_{29}$ &$:$& $[e_1,e_2]=e_3,$   & $[e_1,e_3]=e_4,$ & $[e_2,e_3]=e_4,$  & $[e_1,e_3,e_2]=e_4,$  \\
& &  $[e_2,e_3,e_1]=e_4.$\\

$\mathcal{L}_{30}$ &$:$&  $[e_1,e_3]=e_4,$ &  $[e_1,e_2,e_1]=e_3.$\\

$\mathcal{L}_{31}$ &$:$& $[e_1,e_2,e_1]=e_3,$   & $[e_1,e_2,e_3]=e_4,$   &  $[e_1,e_3,e_2]=e_4.$\\

$\mathcal{L}_{32}$ &$:$& $[e_1,e_2]=e_4,$   & $[e_1,e_2,e_1]=e_3,$   & $[e_1,e_2,e_3]=e_4,$   &  $[e_1,e_3,e_2]=e_4.$\\

$\mathcal{L}_{33}$ &$:$& $[e_1,e_3]=e_4,$   & $[e_1,e_2,e_1]=e_3,$   & $[e_1,e_2,e_3]=e_4,$   &  $[e_1,e_3,e_2]=e_4.$\\

$\mathcal{L}_{34}$ &$:$& $[e_1,e_2]=e_4,$   & $[e_1,e_3]=e_4,$   & $[e_1,e_2,e_1]=e_3,$   & $[e_1,e_2,e_3]=e_4,$  \\
& &  $[e_1,e_3,e_2]=e_4.$\\

$\mathcal{L}_{35}$ &$:$& $[e_1,e_2,e_1]=e_3,$   & $[e_1,e_3,e_1]=e_4,$   &  $[e_1,e_2,e_2]=e_4.$\\

$\mathcal{L}_{36}$ &$:$& $[e_1,e_3]=e_4,$ & $[e_1,e_2,e_1]=e_3,$   & $[e_1,e_3,e_1]=e_4,$   &  $[e_1,e_2,e_2]=e_4.$\\

$\mathcal{L}_{37}$ &$:$& $[e_1,e_2]=e_4,$ & $[e_1,e_2,e_1]=e_3,$   & $[e_1,e_3,e_1]=e_4,$   &  $[e_1,e_2,e_2]=e_4.$\\

$\mathcal{L}_{38}$ &$:$&  $[e_1,e_2,e_1]=e_3,$   & $[e_1,e_3,e_1]=e_4.$\\

$\mathcal{L}_{39}$ &$:$&  $[e_1,e_2]=e_4,$ & $[e_1,e_2,e_1]=e_3,$   & $[e_1,e_3,e_1]=e_4.$\\

$\mathcal{L}_{40}$ &$:$&  $[e_1,e_3]=e_4,$ & $[e_1,e_2,e_1]=e_3,$   & $[e_1,e_3,e_1]=e_4.$\\

$\mathcal{L}_{41}$ &$:$&  $[e_1,e_3]=e_4,$ & $[e_1,e_2,e_1]=e_3,$   & $[e_1,e_2,e_2]=e_4.$\\

$\mathcal{L}_{42}$ &$:$& $[e_1,e_2]=e_3,$   & $[e_2,e_3]=e_4,$   & $[e_1,e_2,e_1]=e_3,$   & $[e_1,e_2,e_3]=-e_4,$  \\
& &  $[e_1,e_3,e_2]=-e_4.$\\

$\mathcal{L}_{43}$ &$:$& $[e_1,e_2]=e_3,$   & $[e_1,e_3]=e_4,$   & $[e_2,e_3]=e_4,$   & $[e_1,e_2,e_1]=e_3,$\\
& & $[e_1,e_2,e_3]=-e_4,$   &  $[e_1,e_3,e_2]=-e_4.$\\

$\mathcal{L}_{44}^{\alpha}$ &$:$& $[e_1,e_2]=e_3+e_4,$   & $[e_1,e_3]=\alpha e_4,$   & $[e_2,e_3]=e_4,$   & $[e_1,e_2,e_1]=e_3,$\\
& & $[e_1,e_2,e_3]=-e_4,$   &  $[e_1,e_3,e_2]=-e_4.$\\

$\mathcal{L}_{45}^{\alpha}$ &$:$& $[e_1,e_2]=e_3,$   & $[e_1,e_3]=e_4,$   & $[e_1,e_2,e_1]=e_3,$
 & $[e_1,e_3,e_1]=\alpha e_4.$\\

$\mathcal{L}_{46}^{\alpha\neq 0}$ &$:$& $[e_1,e_2]=e_3+\alpha e_4,$   & $[e_1,e_3]=e_4,$   & $[e_1,e_2,e_1]=e_3,$
 & $[e_1,e_3,e_1]=e_4.$\\

$\mathcal{L}_{47}^{\alpha}$ &$:$& $[e_1,e_2]=e_3,$   & $[e_1,e_3]=e_4,$   & $[e_1,e_2,e_1]=e_3,$
 & $[e_1,e_2,e_2]=e_4,$\\
 & &  $[e_1,e_3,e_1]=\alpha e_4.$\\

$\mathcal{L}_{48}$ &$:$& $[e_1,e_2]=e_3,$  & $[e_1,e_2,e_1]=e_3,$
 & $[e_1,e_3,e_1]=e_4.$\\

$\mathcal{L}_{49}$ &$:$& $[e_1,e_2]=e_3,$  & $[e_1,e_2,e_1]=e_3,$
 & $[e_1,e_2,e_2]=e_4,$ & $[e_1,e_3,e_1]=e_4.$\\

$\mathcal{L}_{50}$ &$:$& $[e_1,e_2,e_1]=e_3,$
 & $[e_1,e_2,e_2]=e_4.$\\

$\mathcal{L}_{51}$ &$:$& $[e_1,e_2]=e_4,$
 & $[e_1,e_2,e_1]=e_3.$\\

$\mathcal{L}_{52}$ &$:$& $[e_1,e_2]=e_3,$
 & $[e_1,e_2,e_1]=e_3,$  & $[e_1,e_2,e_2]=e_4.$
\end{longtable} }
Where  $\mathcal{L}_{46}^{0}\cong \mathcal{L}_{45}^{1}.$
\end{theorem}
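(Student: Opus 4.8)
The plan is to prove this classification by the central extension (Skjelbred–Sund) method, adapted to the bi-operation setting of Lie–Yamaguti algebras. The starting observation is that every nonzero nilpotent Lie–Yamaguti algebra $\mathcal{L}$ has a nonzero annihilator
$$Z(\mathcal{L}) = \{z : [z,\mathcal{L}]=0,\ [z,\mathcal{L},\mathcal{L}]=[\mathcal{L},z,\mathcal{L}]=[\mathcal{L},\mathcal{L},z]=0\},$$
so quotienting by a one-dimensional central ideal produces a nilpotent Lie–Yamaguti algebra of strictly smaller dimension. Consequently every $4$-dimensional example arises as a central extension of a nilpotent Lie–Yamaguti algebra of dimension $1$, $2$, or $3$, and it suffices to first classify those lower-dimensional algebras and then describe all of their central extensions into dimension $4$.

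First I would set up the relevant cohomology. A central extension of a Lie–Yamaguti algebra $\mathcal{A}$ by a trivial module $V$ is governed by a pair $(\theta,\omega)$, where $\theta:\mathcal{A}\times\mathcal{A}\to V$ is bilinear and $\omega:\mathcal{A}\times\mathcal{A}\times\mathcal{A}\to V$ is trilinear, the new products being $[x,y]_{\mathcal{A}}+\theta(x,y)$ and $[x,y,z]_{\mathcal{A}}+\omega(x,y,z)$. Imposing the axioms (LY1)--(LY6) on the extended algebra yields a coupled system of linear conditions on $(\theta,\omega)$ that cuts out the cocycle space $Z^2(\mathcal{A},V)$; the coboundaries $B^2(\mathcal{A},V)$ arise from central shifts of linear maps $\mathcal{A}\to V$, and one forms $H^2=Z^2/B^2$. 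The group $\mathrm{Aut}(\mathcal{A})$ acts on $H^2$, and isomorphism classes of central extensions of central dimension $s$ correspond to $\mathrm{Aut}(\mathcal{A})$-orbits of $s$-dimensional subspaces $W\subseteq H^2$ meeting the annihilator part trivially; this nondegeneracy condition ensures that the annihilator of the extension coincides with the adjoined central subspace rather than growing spuriously.

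I would then carry out the enumeration dimension by dimension: classify the nilpotent Lie–Yamaguti algebras in dimensions $\le 3$ directly from the identities, and for each such $\mathcal{A}$ compute $Z^2$, $B^2$, the group $\mathrm{Aut}(\mathcal{A})$, and the orbits of one-, two-, and three-dimensional subspaces of $H^2$, reading off the resulting $4$-dimensional extensions. Representatives are chosen so that their presentations match $\mathcal{L}_{01}$--$\mathcal{L}_{52}$, and the one-parameter families $\mathcal{L}_{16}^\alpha,\dots,\mathcal{L}_{47}^\alpha$ emerge as orbit families in which the scalar $\alpha$ is a genuine modulus. Finally, pairwise non-isomorphism is verified through discrete invariants preserved under isomorphism — the dimensions of the successive terms $\mathcal{L}^{(n)}$, the dimension of the annihilator, the ranks of the bilinear and trilinear parts, and $\dim\mathfrak{Der}(\mathcal{L})$ — supplemented inside each parametric family by the explicit action of base changes on $\alpha$ (as reflected in the stated coincidence $\mathcal{L}_{46}^0\cong\mathcal{L}_{45}^1$).

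The hard part will be twofold. First, deriving the correct cocycle conditions: because the six axioms (LY1)--(LY6) intertwine the binary and ternary operations, the linearized constraints on $\theta$ and $\omega$ are tightly coupled, and a single error there would corrupt the entire census. Second, the orbit enumeration under $\mathrm{Aut}(\mathcal{A})$ is combinatorially heavy — especially for base algebras with large automorphism groups, where many a priori distinct cocycle subspaces collapse to one isomorphism class, and where the parametric families must be normalized with care to avoid both redundancy and omission.
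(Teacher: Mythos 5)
The paper itself contains no proof of this theorem — it is imported wholesale from \cite{LYC} — and that source establishes the classification by precisely the method you outline: the Skjelbred--Sund central-extension procedure adapted to the coupled binary/ternary cocycle conditions for Lie--Yamaguti algebras, with $\mathrm{Aut}$-orbits of subspaces of $H^2$ over the lower-dimensional nilpotent quotients, so your plan coincides with the approach behind the cited result. The only point worth tightening is your nondegeneracy condition: it should be phrased as requiring that the common radical of the chosen cocycles intersect the annihilator of the base algebra trivially (a condition on cocycle radicals, not on $W$ as a subspace of $H^2$ ``meeting the annihilator part''), since that is what guarantees the annihilator of the extension is exactly the adjoined central subspace.
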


\begin{remark} The algebras listed below form   Lie triple systems.
$$\mathcal{L}_{02}, \
\mathcal{L}_{06},\
\mathcal{L}_{10},\
\mathcal{L}_{16}^{\alpha},\
\mathcal{L}_{31},\
\mathcal{L}_{35},\
\mathcal{L}_{50}.$$

\end{remark}

\begin{definition}[see \cite{Fill}] A Bol algebra is a vector space $\mathfrak{B}$  with a bilinear multiplication $[-,-]$ and a trilinear multiplication
$[-,-,-]$ satisfying:

$\begin{array}{ll}
({\rm B1}) &  [x,y]+[y,x]=0, \\
({\rm B2}) &  [x,y,z]+[y,x,z]=0, \\
({\rm B3}) &  [x,y,z]+[y,z,x]+[z,x,y]=0, \\
({\rm B4}) &  [u,v,[x,y,z]]-[[u,v,x],y,z]-[x,[u,v,y],z]-[x,y,[u,v,z]]=0, \\
({\rm B5}) &  [[x,y,z],t]-[[x,y,t],z]+[z,t,[x,y]]-[x,y,[z,t]]+[[x,y],[z,t]]=0. \\
\end{array}$

\end{definition}

Identities $({\rm B2})$--$({\rm B4})$ mean that a Bol algebra is a Lie triple system with respect to the trilinear operation $[-,-,-]$. Thus, Lie--Yamaguti algebras and Bol algebras are generalizations of the   Lie triple systems.

\begin{theorem}[see \cite{BC}]
Let $\mathfrak{B}$ be a 4-dimensional complex nilpotent Bol algebra. Then $\mathfrak{B}$ is isomorphic to  one of the
 following Lie--Yamaguti algebras $\mathcal{L}_{01},\dots,\mathcal{L}_{41},$ $\mathcal{L}_{45}^{\alpha},\dots, \mathcal{L}_{52}$ or one of the following mutually non-isomorphic algebras:

{\small
\setlength{\tabcolsep}{2pt} 
\begin{longtable}{@{}llllllllll@{}} 

$\mathfrak{B}_{01}$ &$:$& $[e_1,e_2]=e_3,$   & $[e_1,e_2,e_3]=e_4,$   & $[e_2,e_3,e_1]=-\frac{1}{2}e_4,$ \\
 & & $[e_1,e_3,e_2]=\frac{1}{2}e_4,$ & $[e_2,e_3,e_2]=e_4.$\\

$\mathfrak{B}_{02}$ &$:$& $[e_1,e_2]=e_3,$   & $[e_1,e_3]=e_4,$   & $[e_1,e_2,e_3]=e_4,$   & $[e_2,e_3,e_1]=-\frac{1}{2}e_4,$\\
& & $[e_1,e_3,e_2]=\frac{1}{2}e_4,$ & $[e_2,e_3,e_2]=e_4.$\\

$\mathfrak{B}_{03}$ &$:$& $[e_1,e_2]=e_3,$   & $[e_2,e_3]=e_4,$   & $[e_1,e_2,e_3]=e_4,$   & $[e_2,e_3,e_1]=-\frac{1}{2}e_4,$\\
& & $[e_1,e_3,e_2]=\frac{1}{2}e_4,$ & $[e_2,e_3,e_2]=e_4.$\\

$\mathfrak{B}_{04}^{\alpha}$ &$:$& $[e_1,e_2]=e_3,$   &  $[e_1,e_2,e_3]=e_4,$   & $[e_2,e_3,e_1]=(\alpha-1)e_4,$
&  $[e_1,e_3,e_2]=\alpha e_4.$ \\

$\mathfrak{B}_{05}^{\alpha}$ &$:$& $[e_1,e_2]=e_3,$   & $[e_1,e_3]=e_4,$  &  $[e_1,e_2,e_3]=e_4,$ \\
& & $[e_2,e_3,e_1]=(\alpha-1)e_4,$
&  $[e_1,e_3,e_2]=\alpha e_4.$ \\

$\mathfrak{B}_{06}^{\alpha}$ &$:$& $[e_1,e_2]=e_3,$   & $[e_1,e_3]=e_4,$  & $[e_2,e_3]=e_4,$  & $[e_1,e_2,e_3]=e_4,$ \\
& & $[e_2,e_3,e_1]=(\alpha-1)e_4,$
&  $[e_1,e_3,e_2]=\alpha e_4.$ \\

$\mathfrak{B}_{07}^{\alpha}$ &$:$& $[e_1,e_2]=e_3,$   & $[e_1,e_3]=\alpha e_4,$  &  $[e_1,e_2,e_1]=e_4,$  &  $[e_1,e_2,e_3]=e_4,$ \\
& & $[e_2,e_3,e_1]=e_4,$
&  $[e_1,e_3,e_2]=2e_4.$ \\

$\mathfrak{B}_{08}^{\alpha}$ &$:$& $[e_1,e_2]=e_3,$   & $[e_1,e_3]=\alpha e_4,$  & $[e_2,e_3]=e_4,$  & $[e_1,e_2,e_1]=e_4,$   \\
& &  $[e_1,e_2,e_3]=e_4,$ & $[e_2,e_3,e_1]=e_4,$
&  $[e_1,e_3,e_2]=2e_4.$ \\

$\mathfrak{B}_{09}$ &$:$& $[e_1,e_2]=e_3,$   &  $[e_1,e_2,e_1]=e_3,$  &  $[e_1,e_2,e_3]=e_4,$ & $[e_1,e_3,e_2]=e_4.$ \\

$\mathfrak{B}_{10}$ &$:$& $[e_1,e_2]=e_3,$   & $[e_1,e_3]=e_4,$   &  $[e_1,e_2,e_1]=e_3,$ \\
& &  $[e_1,e_2,e_3]=e_4,$ & $[e_1,e_3,e_2]=e_4.$ \\

\end{longtable} }

\end{theorem}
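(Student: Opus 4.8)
The plan is to build all $4$-dimensional nilpotent Bol algebras by central extensions, and to use the already-available classification of $4$-dimensional nilpotent Lie-Yamaguti algebras to organize and shortcut the bulk of the work. The key structural remark is that the axioms $({\rm B1})$, $({\rm B2})$, $({\rm B4})$ are literally $({\rm LY1})$, $({\rm LY2})$, $({\rm LY6})$, so a Bol algebra and a Lie-Yamaguti algebra differ only through $\{({\rm B3}),({\rm B5})\}$ versus $\{({\rm LY3}),({\rm LY4}),({\rm LY5})\}$. Since $({\rm LY3})$ reads $\sum_{\mathrm{cyc}}[x,y,z]=-\sum_{\mathrm{cyc}}[[x,y],z]$ while $({\rm B3})$ reads $\sum_{\mathrm{cyc}}[x,y,z]=0$, a Lie-Yamaguti algebra is a Bol algebra exactly when its binary commutators satisfy the Jacobi-type relation $\sum_{\mathrm{cyc}}[[x,y],z]=0$ together with $({\rm B5})$ in place of $({\rm LY4})$, $({\rm LY5})$.

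The first track is therefore a finite screening: for each of the algebras $\mathcal{L}_{01},\dots,\mathcal{L}_{52}$ of the preceding theorem I would substitute basis vectors into the two axioms $({\rm B3})$ and $({\rm B5})$ and keep those satisfying both identically. This is where $\mathcal{L}_{42}$, $\mathcal{L}_{43}$, $\mathcal{L}_{44}^{\alpha}$ are eliminated: each carries a nonzero bracket $[e_2,e_3]=e_4$ coupling to the ternary part that breaks $({\rm B5})$ (one checks directly that the cyclic binary sum still vanishes, so $({\rm B3})$ alone does not expose them), whereas $\mathcal{L}_{01},\dots,\mathcal{L}_{41}$ and $\mathcal{L}_{45}^{\alpha},\dots,\mathcal{L}_{52}$ survive and hence are genuine Bol algebras. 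This screening cannot produce the algebras that are Bol but not Lie-Yamaguti, so those remain to be found independently.

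The second track produces the new algebras $\mathfrak{B}_{01},\dots,\mathfrak{B}_{10}$ by the Skjelbred--Sund central-extension method adapted to the variety of Bol algebras. A nontrivial nilpotent Bol algebra has nonzero annihilator, so every $4$-dimensional example is a one-dimensional central extension of a nilpotent Bol algebra of dimension at most $3$. I would (i) classify the $1$-, $2$- and $3$-dimensional nilpotent Bol algebras; (ii) for each base $\mathfrak{B}_0$ describe the space of pairs $(\theta,\omega)$ of a bilinear and a trilinear form with values in the one-dimensional kernel that satisfy the linearizations of $({\rm B1})$--$({\rm B5})$, modulo coboundaries, i.e.\ the analogue of $H^2$ for Bol algebras; and (iii) let $\mathrm{Aut}(\mathfrak{B}_0)$ act on this space and select orbit representatives, discarding the split extensions that merely adjoin a central direction to a lower-dimensional algebra. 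Removing the extensions that reproduce an algebra from the first track leaves exactly the ten new algebras, with the free parameter surviving in $\mathfrak{B}_{04}^{\alpha},\dots,\mathfrak{B}_{08}^{\alpha}$.

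Completeness then holds because the central-extension construction is exhaustive, and pairwise non-isomorphism I would settle with the usual discriminating invariants: the dimensions $\dim\mathfrak{Der}(\mathfrak{B})$ and $\dim\mathfrak{B}^{(k)}$, the isomorphism type of the annihilator, and the rank of the trilinear operation; for the one-parameter families one further checks which admissible base changes rescale $\alpha$, so as to exclude accidental coincidences. The main obstacle I anticipate lies entirely in the second track: the axiom $({\rm B5})$ intertwines the binary and ternary brackets through five terms, so the cocycle equations are markedly more entangled than in the Lie, Leibniz or Lie-Yamaguti settings, and the delicate point is to separate the genuinely new Bol extensions from those that silently collapse onto algebras already on the Lie-Yamaguti list.
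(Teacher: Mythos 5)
Your proposal cannot be checked against an internal argument, because this paper contains none: the classification of four-dimensional nilpotent Bol algebras is imported verbatim from the cited work of Abdelwahab, Calder\'on and Fern\'andez Ouaridi \cite{BC}, and is used here only as the input list for the geometric (degeneration) arguments that the paper actually proves. Measured against that source, your plan is essentially the right one, and its concretely checkable claims are correct. Indeed $({\rm B1}),({\rm B2}),({\rm B4})$ coincide with $({\rm LY1}),({\rm LY2}),({\rm LY6})$, so a Lie-Yamaguti algebra is a Bol algebra precisely when its binary bracket satisfies the Jacobi identity (which, combined with $({\rm LY3})$, yields $({\rm B3})$) and $({\rm B5})$ holds; and your diagnosis of the three excluded algebras is accurate: taking $x=z=e_1$, $y=t=e_2$ in $({\rm B5})$, the only surviving contribution for $\mathcal{L}_{42},\mathcal{L}_{43},\mathcal{L}_{44}^{\alpha}$ is $[[e_1,e_2,e_1],e_2]=[e_3,e_2]=-e_4\neq 0$, exactly the coupling of $[e_1,e_2,e_1]=e_3$ with $[e_2,e_3]=e_4$ that you identify, while the binary Jacobi identity holds in all three cases, so $({\rm B3})$ alone would not expose them. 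The two-track structure -- screening the known Lie-Yamaguti list against $({\rm B3})$ and $({\rm B5})$, then running the Skjelbred-Sund central-extension machinery (legitimate since nilpotency forces a nonzero annihilator, so every four-dimensional algebra is a one-dimensional central extension of a three-dimensional one) to capture the algebras that are Bol but not Lie-Yamaguti -- is precisely the methodology of \cite{BC} and of the companion Lie-Yamaguti classification \cite{LYC}. The caveat is that what you have written is a program rather than a proof: the substance of \cite{BC} lies in the explicit computation of the cocycle spaces for each lower-dimensional base algebra, the action of its automorphism group on those spaces, and the elimination of redundant parameters; until those computations are carried out, the assertions that exactly ten new algebras $\mathfrak{B}_{01},\dots,\mathfrak{B}_{10}$ arise, that they are pairwise non-isomorphic, and that parameters survive exactly in $\mathfrak{B}_{04}^{\alpha},\dots,\mathfrak{B}_{08}^{\alpha}$, remain unverified claims rather than conclusions.
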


\begin{definition}
	A compatible Lie algebra is vector space $\mathfrak{g}$ with two bilinear multiplications $[-,-]$ and $\{-,-\}$, where
    $\mathfrak{g}_1=(\mathfrak{g},[-,-])$ and $\mathfrak{g}_2=(\mathfrak{g},\{-,-\})$ are Lie algebras and the two operations are required to satisfy the following identity
    $$\{[x,y],z\}+\{[y,z],x\}+\{[z,x],y\}+[\{x,y\},z]+[\{y,z\},x]+[\{z,x\},y]=0.$$
\end{definition}





\begin{theorem}[see \cite{Comp}]
Let $\mathfrak{g}$ be a complex $3$-dimensional nilpotent
compatible Lie algebra. Then $\mathfrak{g}$ is isomorphic to one of the following pairwise non-isomorphic algebras:

\begin{longtable}{llllllllll}


$
\mathfrak{L}_{1}$ & $:$ &  $ \{e_1,e_2\}= e_3.$   \\

$\mathfrak{L}_{2}^{\alpha}$ & $:$ & $[e_1,e_2]=e_3,$     & $
\{e_1,e_2\}= \alpha e_3.$   \\

\end{longtable}

\end{theorem}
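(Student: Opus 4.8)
The plan is to exploit the fact that the compatibility identity is precisely the condition that every linear combination $\lambda[-,-]+\mu\{-,-\}$ is again a Lie bracket. Indeed, expanding the Jacobi identity for $\lambda[-,-]+\mu\{-,-\}$ produces a $\lambda^{2}$-term (the Jacobiator of $[-,-]$), a $\mu^{2}$-term (the Jacobiator of $\{-,-\}$), and a $\lambda\mu$-term which is exactly the left-hand side of the compatibility identity; since each bracket is already a Lie bracket, compatibility is equivalent to Jacobi holding along the whole pencil. Moreover the joint lower central series dominates the lower central series of each pencil member, so the joint nilpotency hypothesis forces every bracket $\lambda[-,-]+\mu\{-,-\}$ to be a nilpotent $3$-dimensional Lie algebra. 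Thus the problem becomes: classify, up to a simultaneous change of basis of $\mathfrak{g}$, pencils of $3$-dimensional nilpotent Lie brackets spanned by the two given ones.

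First I would invoke the classical description of $3$-dimensional Lie algebras: a nilpotent one is unimodular, hence can be written as $[e_i,e_j]=\varepsilon_{ijk}n^{kl}e_l$ with $n$ a symmetric matrix, Jacobi holds automatically for any symmetric $n$, and nilpotency is equivalent to $\mathrm{rank}\,n\le 1$ (rank $0$ gives the abelian algebra, rank $1$ the Heisenberg algebra). Writing the two brackets as symmetric matrices $n_1,n_2$, the $\lambda\mu$-term above is automatically zero because $\lambda n_1+\mu n_2$ is symmetric for all $\lambda,\mu$; so compatibility imposes nothing beyond each bracket being a Lie bracket. The only remaining constraint is joint nilpotency, which now reads: every combination $\lambda n_1+\mu n_2$ has rank $\le 1$. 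A basis change of $\mathfrak{g}$ acts on the pair $(n_1,n_2)$ by a simultaneous congruence, up to a common determinant factor.

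Next I would run the short linear-algebra classification of such pairs. If one of $n_1,n_2$ vanishes, the other has rank $\le 1$; discarding the trivial case and normalizing the nonzero rank-$1$ symmetric matrix over $\mathbb{C}$ to $\mathrm{diag}(1,0,0)$ yields exactly $\mathfrak{L}_{1}$ (when $[-,-]=0$) and the member $\alpha=0$ of $\mathfrak{L}_{2}^{\alpha}$ (when $\{-,-\}=0$). If both have rank $1$, write $n_1=vv^{T}$ and $n_2=ww^{T}$; the hypothesis that every $\lambda n_1+\mu n_2$ has rank $\le 1$ forces $v$ and $w$ to be proportional, since otherwise a generic combination has rank $2$, i.e. a non-nilpotent member of the pencil. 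Hence $n_2=\alpha n_1$ with $\alpha\ne 0$, and normalizing gives $[e_1,e_2]=e_3$, $\{e_1,e_2\}=\alpha e_3$, that is $\mathfrak{L}_{2}^{\alpha}$. Finally I would check that $\alpha$ is a complete invariant: a simultaneous congruence scales $n_1$ and $n_2$ by the same factor, hence preserves the ratio $\alpha$, and it cannot interchange the two (labelled) brackets, so distinct values of $\alpha$ give pairwise non-isomorphic algebras, while $\mathfrak{L}_{1}\not\cong\mathfrak{L}_{2}^{\alpha}$ because $[-,-]$ is zero in one and nonzero in the other.

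I expect the main obstacle to be the both-rank-$1$ case: the proportionality of the two derived lines is exactly where joint nilpotency is indispensable, since two transverse rank-$1$ symmetric forms always produce a rank-$2$ (hence non-nilpotent) bracket in the pencil. Everything else—the reduction to symmetric matrices, the automatic vanishing of the compatibility term, and the invariance of $\alpha$—is routine once this point is settled. A fully elementary alternative, avoiding the unimodular normal form, is to fix the Heisenberg normal form $[e_1,e_2]=e_3$ and argue directly from the joint lower central series that the image of $\{-,-\}$ must coincide with $\langle e_3\rangle$ and that $e_3$ is $\{-,-\}$-central, again forcing $\{e_1,e_2\}=\alpha e_3$; I would present the symmetric-matrix version as the cleaner route.
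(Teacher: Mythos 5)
This theorem is not proved in the paper at all: it is quoted, with attribution, from the classification of Ladra, Leite da Cunha and Lopes \cite{Comp}, so there is no internal argument to measure yours against. Judged on its own, your proof is correct, and its key observation is a genuinely different (and efficient) route. Since every nilpotent Lie algebra is unimodular, both brackets are encoded in a \emph{common} basis by symmetric matrices $n_1,n_2$ via $[e_i,e_j]=\varepsilon_{ijk}n^{kl}e_l$; the Jacobi identity holds identically on the whole linear space of such brackets, so the mixed term in the Jacobiator of the pencil --- which is exactly the compatibility identity of the paper --- vanishes automatically, i.e.\ compatibility costs nothing in dimension $3$. Joint nilpotency dominates the lower central series of every pencil member, so each $\lambda n_1+\mu n_2$ must have rank at most $1$; writing rank-one symmetric matrices over $\mathbb{C}$ as $vv^{T}$ and $ww^{T}$, linear independence of $v,w$ would produce a rank-two, hence non-nilpotent, member of the pencil, forcing $n_2=\alpha n_1$; and $\alpha$ is an isomorphism invariant because a change of basis acts on both matrices by the same invertible linear operator (a determinant-twisted congruence), while isomorphisms of compatible algebras preserve each labelled bracket separately. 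This not only recovers the list $\mathfrak{L}_{1}$, $\mathfrak{L}_{2}^{\alpha}$ but explains structurally why the answer is just a pencil ratio. Three small points of polish: (i) close the loop by noting that the normal forms obtained are indeed jointly nilpotent (immediate, since $e_3$ spans the joint square and is central for both brackets), so the list is exactly the variety being classified; (ii) the algebra with both brackets zero is excluded by the paper's standing convention that algebras are nontrivial; (iii) flag where algebraic closedness enters --- the factorization of a rank-one symmetric matrix as $vv^{T}$ and its normalization to a fixed rank-one form are $\mathbb{C}$-specific. Finally, be aware that the automatic-compatibility phenomenon is special to dimension $3$: already for the four-dimensional classification quoted next in the paper, the compatibility identity is a genuine constraint, so your method does not carry over unchanged.
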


\begin{theorem}[see \cite{Comp}]
Let $\mathfrak{g}$ be a complex $4$-dimensional nilpotent compatible Lie algebra. Then $\mathfrak{g}$ is isomorphic to one of the following pairwise non-isomorphic algebras:

\begin{longtable}{llllllllll}



${L}_{01}$ & $:$ &  $ \{e_1,e_2\}= e_3.$   \\

${L}_{02}^{\alpha}$ & $:$ & $[e_1,e_2]=e_3,$     & $
\{e_1,e_2\}= \alpha e_3.$   \\

${L}_{03}$ &$:$& $[e_2,e_3]=e_4,$     & $
\{e_1,e_3\}= e_4.$   \\





${L}_{04}$ &$:$& $[e_1,e_2]=e_3,$ & $[e_2,e_3]=e_4,$  &  $
\{e_1,e_2\}= e_4.$ \\



${L}_{05}^{\alpha}$ &$:$ & $[e_2,e_3]=\alpha e_4,$   & $
\{e_1,e_2\}= e_3,$  & $
\{e_2,e_3\}= e_4.$ \\

${L}_{06}$ &$:$  & $[e_1,e_3]= e_4,$   & $
\{e_1,e_2\}= e_3,$  & $
\{e_2,e_3\}= e_4.$
\\

${L}_{07}$ &$:$& $[e_1,e_2]=e_4,$     & $
\{e_1,e_2\}= e_3,$  & $
\{e_2,e_3\}= e_4.$ \\


${L}_{08}^{\alpha}$ &$:$& $[e_1,e_2]=e_3,$     & $
\{e_1,e_2\}=\alpha e_3,$  & $
\{e_2,e_3\}= e_4.$ \\

${L}_{09}^{\alpha , \beta}$ &$:$& $[e_1,e_2]=e_3,$   & $[e_2,e_3]=e_4,$   & $
\{e_1,e_2\}=\alpha e_3,$  & $
\{e_2,e_3\}=\beta e_4.$ \\

${L}_{10}^{\alpha}$ &$:$& $[e_1,e_2]=e_3,$   & $[e_2,e_3]=e_4,$   & $
\{e_1,e_2\}=\alpha e_3,$  & $
\{e_1,e_3\}=e_4.$
\end{longtable}

\end{theorem}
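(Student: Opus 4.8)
The plan is to prove the classification by the central extension (Skjelbred--Sund) method, adapted to the two-bracket setting. First I would record that a nonzero nilpotent compatible Lie algebra $\mathfrak g$ has nonzero annihilator $\mathrm{Ann}(\mathfrak g)=\{x:[x,\mathfrak g]=0=\{x,\mathfrak g\}\}$, so that choosing a one-dimensional central ideal $\langle c\rangle\subseteq\mathrm{Ann}(\mathfrak g)$ exhibits $\mathfrak g$ as a one-dimensional central extension of the three-dimensional nilpotent compatible Lie algebra $\mathfrak g/\langle c\rangle$. Since in dimension two every nilpotent compatible Lie algebra is abelian, the possible bases are exactly the three-dimensional nilpotent compatible Lie algebras, namely $\mathbb{C}^3$ together with the indecomposable $\mathfrak{L}_1$ and the family $\mathfrak{L}_2^{\alpha}$ from the preceding theorem. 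Hence it suffices to enumerate all one-dimensional central extensions of these three bases.

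Second, I would set up the relevant second cohomology with trivial coefficients. A one-dimensional central extension of $(\mathfrak g,[-,-],\{-,-\})$ is determined by a pair $(\theta,\vartheta)$ of skew-symmetric bilinear forms on $\mathfrak g$, where $\theta$ is a Chevalley--Eilenberg $2$-cocycle for $\mathfrak g_1=(\mathfrak g,[-,-])$, $\vartheta$ is one for $\mathfrak g_2=(\mathfrak g,\{-,-\})$, and the two are tied together by the cocycle identity obtained by feeding the central deformation into the compatibility axiom, namely
\[
\theta(\{x,y\},z)+\theta(\{y,z\},x)+\theta(\{z,x\},y)+\vartheta([x,y],z)+\vartheta([y,z],x)+\vartheta([z,x],y)=0 .
\]
The coboundaries are the pairs $\big(f\circ[-,-],\,f\circ\{-,-\}\big)$ with $f\in\mathfrak g^{\ast}$, so $H^{2}(\mathfrak g,\mathbb{C})=Z^{2}/B^{2}$ carries an induced action of $\mathrm{Aut}(\mathfrak g)$. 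For each of the three bases I would compute $Z^{2}$, $B^{2}$ and $\mathrm{Aut}(\mathfrak g)$ by direct linear algebra, paying attention to the dependence on $\alpha$ for $\mathfrak{L}_2^{\alpha}$.

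Third, I would extract the genuinely new four-dimensional algebras as the $\mathrm{Aut}(\mathfrak g)$-orbits of one-dimensional subspaces $\langle(\theta,\vartheta)\rangle\subseteq H^{2}(\mathfrak g,\mathbb{C})$ satisfying the nondegeneracy condition $\mathrm{Ann}(\theta,\vartheta)\cap\mathrm{Ann}(\mathfrak g)=0$, where $\mathrm{Ann}(\theta,\vartheta)=\{x:\theta(x,-)=0=\vartheta(x,-)\}$; this condition precisely removes the extensions in which the new generator splits off as an abelian direct factor. Tracking the base in each case, the extensions of $\mathbb{C}^3$ yield $L_{03}$, those of $\mathfrak{L}_1$ yield $L_{05}^{\alpha},L_{06},L_{07}$, and those of $\mathfrak{L}_2^{\alpha}$ yield $L_{04},L_{08}^{\alpha},L_{09}^{\alpha,\beta},L_{10}^{\alpha}$; the two decomposable algebras $L_{01}=\mathfrak{L}_1\oplus\mathbb{C}$ and $L_{02}^{\alpha}=\mathfrak{L}_2^{\alpha}\oplus\mathbb{C}$ are adjoined directly. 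Finally I would confirm that the resulting list is pairwise non-isomorphic using base-change invariants: the dimensions of the terms of the combined lower central series, $\dim\mathrm{Ann}$, and the ranks and relative position of the two brackets.

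The step I expect to be the main obstacle is the orbit analysis over the family $\mathfrak{L}_2^{\alpha}$: both $\mathrm{Aut}(\mathfrak{L}_2^{\alpha})$ and $H^{2}(\mathfrak{L}_2^{\alpha},\mathbb{C})$ vary with $\alpha$, so reducing the admissible cocycle pairs to normal forms produces the two-parameter family $L_{09}^{\alpha,\beta}$ alongside the one-parameter families $L_{08}^{\alpha}$ and $L_{10}^{\alpha}$, and one must pin down the exact parameter identifications (and any coincidences among members) forced by the automorphisms, while simultaneously checking that the combined nilpotency of each candidate survives. It is precisely the need to respect the mixed compatibility-cocycle identity throughout — rather than treating the two brackets independently, as in the single-bracket Lie case — that makes this bookkeeping delicate.
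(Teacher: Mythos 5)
You should first note that the paper you are being compared against contains no proof of this statement at all: the theorem is imported verbatim, with attribution, from the reference \cite{Comp} (Ladra, Leite da Cunha, Lopes), so there is no in-paper argument to measure your proposal against. Judged on its own terms, your strategy is the standard one for exactly this kind of result (it is, for instance, how the Lie--Yamaguti and Bol classifications \cite{LYC,BC} quoted earlier in the paper were obtained), and its key ingredients are correctly identified: nilpotency forces a nonzero annihilator (the last nonzero term of the combined lower central series is annihilated by both brackets), so every nontrivial algebra is a one-dimensional central extension of $\mathbb{C}^3$, $\mathfrak{L}_1$ or $\mathfrak{L}_2^{\alpha}$; the extension data is a pair $(\theta,\vartheta)$ of skew $2$-cocycles for the two Lie structures tied together by precisely the mixed cocycle identity you wrote, with coboundaries $(f\circ[-,-],f\circ\{-,-\})$; and the nondegeneracy condition $\mathrm{Ann}(\theta,\vartheta)\cap\mathrm{Ann}(\mathfrak{g})=0$ correctly separates the indecomposable extensions from the two decomposable families $L_{01}$ and $L_{02}^{\alpha}$. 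One small simplification: your worry about nilpotency of the candidates is unnecessary, since any central extension of a nilpotent compatible Lie algebra by a central line is automatically nilpotent.

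There are, however, two places where the proposal as written falls short of a proof. First, the correspondence you invoke -- isomorphism classes of indecomposable extensions of a fixed base $\leftrightarrow$ $\mathrm{Aut}(\mathfrak{g})$-orbits of admissible lines in $H^2(\mathfrak{g},\mathbb{C})$ -- is the two-bracket analogue of the Skjelbred--Sund theorem; it is routine but must be stated and proved, and you must also handle cross-base duplication: an algebra whose annihilator has dimension $\geq 2$ (e.g.\ $L_{03}$) arises as an extension of more than one three-dimensional quotient, so the union over the three bases contains repetitions that your final invariant check must eliminate globally, not merely within each family. Second, and more importantly, everything that makes the theorem true -- the explicit computation of $Z^2$, $B^2$ and $\mathrm{Aut}$ for $\mathbb{C}^3$, $\mathfrak{L}_1$, $\mathfrak{L}_2^{\alpha}$, the reduction of admissible cocycle pairs to normal forms, and the exact parameter identifications inside $L_{08}^{\alpha}$, $L_{09}^{\alpha,\beta}$, $L_{10}^{\alpha}$ -- is deferred; the assignment of which orbits produce which $L_{i}$ is asserted rather than derived. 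So the proposal is a sound and standard plan with no step that would fail in principle, but it is an outline of the classification rather than the classification itself.
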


\section{The geometric classification}

\subsection{Preliminaries:  geometric classification}\label{refff}

 Given a complex vector space ${\mathbb V}$ of dimension $n$, the set of bilinear and trilinear maps
 \begin{longtable}{rclcl}
 $\textrm{Bil}({\mathbb V} \times {\mathbb V}, {\mathbb V}) $&$\cong $&$\textrm{Hom}({\mathbb V} ^{\otimes2}, {\mathbb V})$&$\cong $&$({\mathbb V}^*)^{\otimes2} \otimes {\mathbb V},$\\
$\textrm{Tril}({\mathbb V} \times {\mathbb V} \times {\mathbb V}, {\mathbb V})$&$ \cong $&$\textrm{Hom}({\mathbb V} ^{\otimes3}, {\mathbb V})$&$\cong$&$ ({\mathbb V}^*)^{\otimes3} \otimes {\mathbb V}$
\end{longtable}\noindent
are vector spaces of dimension $n^3$ and $n^4$, respectively. The set of pairs of bilinear and trilinear maps
\begin{center}$\textrm{Bil}({\mathbb V} \times {\mathbb V}, {\mathbb V}) \oplus \textrm{Tril}({\mathbb V}\times {\mathbb V}\times {\mathbb V}, {\mathbb V}) \cong (({\mathbb V}^*)^{\otimes2} \otimes {\mathbb V}) \oplus (({\mathbb V}^*)^{\otimes3} \otimes {\mathbb V})$ \end{center} is a vector space of dimension $n^4+n^3$. This vector space has the structure of the affine space $\mathbb{C}^{n^4+n^3}$ in the following sense:
fixed a basis $e_1, \ldots, e_n$ of ${\mathbb V}$, any pair $(\mu, \mu') \in \textrm{Bil}({\mathbb V} \times {\mathbb V}, {\mathbb V}) \oplus \textrm{Tril}({\mathbb V}\times {\mathbb V}\times {\mathbb V}, {\mathbb V})$ is determined by some parameters $c_{i,j}^k, c_{i,j,k}^l \in \mathbb{C}$,  called {structural constants},  such that
$$\mu(e_i, e_j) = \sum_{k=1}^n c_{i,j}^k e_k \textrm{ and } \mu'(e_i, e_j,e_k) = \sum_{l=1}^n c_{i,j,k}^l e_l$$
which correspond to a point in the affine space $\mathbb{C}^{n^4+n^3}$. Then a subset $\mathcal S$ of $\textrm{Bil}({\mathbb V} \times {\mathbb V}, {\mathbb V}) \oplus \textrm{Tril}({\mathbb V}\times {\mathbb V}\times {\mathbb V}, {\mathbb V})$ corresponds to an algebraic variety, i.e., a Zariski closed set, if there are some polynomial equations in variables $c_{i,j}^k, c_{i,j,k}^l$ with zero locus equal to the set of structural constants of the pairs in $\mathcal S$.

Given the identities defining Lie--Yamaguti (Bol) algebras, we can obtain a set of polynomial equations in the variables $c_{i,j}^k, c_{i,j,k}^l$. This class of $n$-dimensional Lie--Yamaguti (Bol) algebras is a variety. Denote it by $\mathcal{T}_{n}$.
Now, consider the following action of $\rm{GL}({\mathbb V})$ on ${\mathcal T}_{n}$:
$$(g*\mu)(x,y) := g \mu (g^{-1} x, g^{-1} y), \quad (g* \mu')(x,y,z) :=  g \mu' (g^{-1} x, g^{-1} y, g^{-1} z)$$
for $g\in\rm{GL}({\mathbb V})$, $(\mu, \mu')\in \mathcal{T}_{n}$ and for any $x, y,z \in {\mathbb V}$. Observe that the $\textrm{GL}({\mathbb V})$-orbit of $(\mu, \mu')$, denoted $O((\mu, \mu'))$, contains all the structural constants of the pairs isomorphic to the Lie--Yamaguti (Bol)  algebra with structural constants $(\mu, \mu')$.

A geometric classification of a variety of algebras consists of describing the irreducible components of the variety. Recall that any affine variety can be represented uniquely as a finite union of its irreducible components.
Note that describing the irreducible components of  ${\mathcal{T}_{n}}$ gives us the rigid algebras of the variety, which are those bilinear pairs with an open $\textrm{GL}(\mathbb V)$-orbit. This is because a bilinear pair is rigid in a variety if and only if the closure of its orbit is an irreducible component of the variety.
For this reason, the following notion is convenient. Denote by $\overline{O((\mu, \mu'))}$ the closure of the orbit of $(\mu, \mu')\in{\mathcal{T}_{n}}$.

\begin{definition}
\rm Let ${\rm T} $ and ${\rm T}'$ be two $n$-dimensional Lie--Yamaguti (Bol)  algebras  corresponding to the variety $\mathcal{T}_{n}$, and let $(\mu, \mu'), (\lambda,\lambda') \in \mathcal{T}_{n}$ be their representatives in the affine space, respectively. The algebra ${\rm T}$ is said to {degenerate}  to ${\rm T}'$, and we write ${\rm T} \to {\rm T} '$, if $(\lambda,\lambda')\in\overline{O((\mu, \mu'))}$. If ${\rm T}  \not\cong {\rm T}'$, then we call it a  {proper degeneration}.
Conversely, if $(\lambda,\lambda')\not\in\overline{O((\mu, \mu'))}$ then  we say that ${{\rm T} }$ does not degenerate to ${{\rm T} }'$
and we write ${{\rm T} }\not\to {{\rm T} }'$.
\end{definition}

Furthermore, we have the following notion for a parametric family of algebras.

\begin{definition}
Let ${{\rm T} }(*)=\{{{\rm T} }(\alpha): {\alpha\in I}\}$ be a family of $n$-dimensional Lie--Yamaguti (Bol)  algebras  corresponding to ${\mathcal{T} }_n$ and let ${{\rm T} }'$ be another $n$-dimensional Lie--Yamaguti (Bol)  algebra. Suppose that ${{\rm T} }(\alpha)$ is represented by the structure $(\mu(\alpha),\mu'(\alpha))\in{\mathcal{T} }_n$ for $\alpha\in I$ and ${{\rm T} }'$ is represented by the structure $(\lambda, \lambda')\in{\mathcal{T} }_n$. We say that the family ${{\rm T} }(*)$ {degenerates}   to ${{\rm T} }'$, and write ${{\rm T} }(*)\to {{\rm T} }'$, if $(\lambda,\lambda')\in\overline{\{O((\mu(\alpha),\mu'(\alpha)))\}_{\alpha\in I}}$.
Conversely, if $(\lambda,\lambda')\not\in\overline{\{O((\mu(\alpha),\mu'(\alpha)))\}_{\alpha\in I}}$ then we call it a  {non-degeneration}, and we write ${{\rm T} }(*)\not\to {{\rm T} }'$.

\end{definition}

Observe that ${\rm T}'$ corresponds to an irreducible component of $\mathcal{T}_n$ (more precisely, $\overline{{\rm T}'}$ is an irreducible component) if and only if ${{\rm T} }\not\to {{\rm T} }'$ for any $n$-dimensional Lie--Yamaguti (Bol)  algebra ${\rm T}$ and ${{\rm T}(*) }\not\to {{\rm T} }'$ for any parametric family of $n$-dimensional Lie--Yamaguti (Bol)  algebras ${\rm T}(*)$. In this case, we will use the next ideas to prove that a particular algebra corresponds to an irreducible component.
Firstly, since $\mathrm{dim}\,O((\mu, \mu')) = n^2 - \mathrm{dim}\,\mathfrak{Der}({\rm T})$, it follows that if $ {\rm T} \to  {\rm T} '$ and  ${\rm T} \not\cong  {\rm T} '$, then $\mathrm{dim}\,\mathfrak{Der}( {\rm T} )<\mathrm{dim}\,\mathfrak{Der}( {\rm T} ')$, where $\mathfrak{Der}( {\rm T} )$ denotes the Lie algebra of derivations of  ${\rm T} $.

Secondly,  to show degenerations, let ${{\rm T} }$ and ${{\rm T} }'$ be two Lie--Yamaguti (Bol) algebras represented by the structures $(\mu, \mu')$ and $(\lambda, \lambda')$ from ${{\mathcal T} }_n$, respectively. Let $c_{i,j}^k, c_{i,j,k}^l$ be the structure constants of $(\lambda, \lambda')$ in a basis $e_1,\dots, e_n$ of ${\mathbb V}$. If there exist $n^2$ maps $a_i^j(t): \mathbb{C}^*\to \mathbb{C}$ such that $E_i(t)=\sum_{j=1}^na_i^j(t)e_j$ ($1\leq i \leq n$) form a basis of ${\mathbb V}$ for any $t\in\mathbb{C}^*$ and the structure constants $c_{i,j}^k(t), c_{i,j,k}^l(t)$ of $(\mu, \mu')$ in the basis $E_1(t),\dots, E_n(t)$ satisfy $\lim\limits_{t\to 0}c_{i,j}^k(t)=c_{i,j}^k$ and $\lim\limits_{t\to 0}c_{i,j,k}^l(t)=c_{i,j,k}^l$, then ${{\rm T} }\to {{\rm T} }'$. In this case,  $E_1(t),\dots, E_n(t)$ is called a { \it parametric basis} for ${{\rm T} }\to {{\rm T} }'$.
If $E_{e_1}^t, E_{e_2}^t, E_{e_3}^t, E_{e_4}^t$ is a parametric basis for ${\bf A}\to {\bf B}$, then we denote a degeneration by ${\bf A}\xrightarrow{(E_{e_1}^t, E_{e_2}^t, E_{e_3}^t, E_{e_4}^t)} {\bf B}$.

Thirdly, to prove non-degenerations we may use a remark that follows from the following lemma (see \cite{afm} and the references therein). 

\begin{lemma} 
Consider two Lie--Yamaguti (Bol)  algebras ${\rm T}$ and ${\rm T}'$. Suppose ${\rm T} \to {\rm T}'$. Let ${\mathcal{Z}}$  be a Zariski closed set in ${\mathcal T}_n$ that is stable by the action of the invertible upper (lower) triangular matrices. Then if there is a representation $(\mu, \mu')$ of ${\rm T}$ in ${\mathcal{Z}}$, then there is a representation $(\lambda, \lambda')$ of ${\rm T}'$ in ${\mathcal{Z}}$.
\end{lemma}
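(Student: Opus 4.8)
The plan is to pass from the full linear group to its Borel subgroup of triangular matrices and to exploit the completeness of the associated flag variety. Write $G=\mathrm{GL}(\mathbb V)$ and let $B\le G$ be the subgroup of invertible upper (lower) triangular matrices; this is a Borel subgroup of $G$, and the quotient $G/B$ is a complete (projective) variety. By hypothesis $\mathrm{T}$ has a representative $(\mu,\mu')\in\mathcal Z$, and since $\mathrm{T}\to\mathrm{T}'$ a representative $(\lambda,\lambda')$ of $\mathrm{T}'$ satisfies $(\lambda,\lambda')\in\overline{O((\mu,\mu'))}=\overline{G\cdot(\mu,\mu')}$. Recall the orbit closure is $G$-stable, so this does not depend on the chosen representative of $\mathrm{T}'$. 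The goal is to exhibit a representative of $\mathrm{T}'$ lying inside $\mathcal Z$.

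First I would record the Borel orbit-closure identity
$$\overline{G\cdot(\mu,\mu')}=G\cdot\overline{B\cdot(\mu,\mu')}.$$
The inclusion $\supseteq$ is clear, since $\overline{B\cdot(\mu,\mu')}\subseteq\overline{G\cdot(\mu,\mu')}$ and the latter set is $G$-stable. For the inclusion $\subseteq$, note that $\overline{B\cdot(\mu,\mu')}$ is a $B$-stable closed subset, so the action morphism descends to the associated bundle $G\times_B\overline{B\cdot(\mu,\mu')}\to\mathcal T_n$; because $G/B$ is complete this morphism is proper, hence its image $G\cdot\overline{B\cdot(\mu,\mu')}$ is Zariski closed. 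As this image is $G$-stable and contains the orbit $G\cdot(\mu,\mu')$, it contains its closure, which yields the equality.

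With the identity established, the remainder is formal bookkeeping with closed sets. Since $(\mu,\mu')\in\mathcal Z$, $\mathcal Z$ is stable under $B$, and $\mathcal Z$ is Zariski closed, we obtain $B\cdot(\mu,\mu')\subseteq\mathcal Z$ and hence $\overline{B\cdot(\mu,\mu')}\subseteq\mathcal Z$. Because $(\lambda,\lambda')\in\overline{G\cdot(\mu,\mu')}=G\cdot\overline{B\cdot(\mu,\mu')}$, we may write $(\lambda,\lambda')=g\cdot z$ with $g\in G$ and $z\in\overline{B\cdot(\mu,\mu')}\subseteq\mathcal Z$. Then $z=g^{-1}\cdot(\lambda,\lambda')$ lies in the $G$-orbit of $(\lambda,\lambda')$, so $z$ represents an algebra isomorphic to $\mathrm{T}'$, and $z\in\mathcal Z$; that is, $z$ is the sought representation of $\mathrm{T}'$ in $\mathcal Z$.

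The only non-formal step is the orbit-closure identity of the second paragraph, and I expect it to be the main obstacle: it rests on the completeness of the flag variety $G/B$, equivalently on the properness of the action map out of the associated bundle, and this is precisely where the hypothesis that $\mathcal Z$ be stable under the \emph{triangular} matrices is used. The conclusion would fail if $B$ were replaced by an arbitrary subgroup whose homogeneous space is not complete. Everything else reduces to the elementary facts that the closure of an orbit is group-stable and that the closure of a subset of a closed set remains inside that closed set.
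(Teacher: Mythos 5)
Your proof is correct and is essentially the canonical argument: the paper itself does not prove this lemma but defers to \cite{afm} and the references therein, where the proof is exactly your reduction via the identity $\overline{G\cdot(\mu,\mu')}=G\cdot\overline{B\cdot(\mu,\mu')}$, established through the completeness of the flag variety $G/B$. Your bookkeeping with the $B$-stable closed set $\mathcal{Z}$ then matches that standard treatment step for step.
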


To apply this lemma, we will give the explicit definition of the appropriate stable Zariski closed ${\mathcal{Z}}$ in terms of the variables $c_{i,j}^k, c_{i,j,k}^l$ in each case. For clarity, we assume by convention that $c_{i,j}^k=0$ (resp. $c_{i,j,k}^l=0$) if $c_{i,j}^k$ (resp. $c_{i,j,k}^l$) is not explicitly mentioned in the definition of ${\mathcal{Z}}$.

\begin{remark}
\label{redbil}
Let ${{\rm T} }$ and ${{\rm T} }'$ be two Lie--Yamaguti (Bol) algebras represented by the structures $(\mu, \mu')$ and $(\lambda, \lambda')$ from ${{\rm T} }_n$. Suppose ${\rm T}\to{\rm T}'$. Then if $\mu,  \lambda$ represent algebras ${\rm T}_{0}, {\rm T}'_{0}$ in the affine space $\mathbb{C}^{n^3}$ and
$\mu', \lambda'$ represent algebras ${\rm T}_{1}, {\rm T}'_{1}$ in the affine space $\mathbb{C}^{n^4}$
of algebras with a single multiplication, respectively, we have ${\rm T}_{0}\to {\rm T}'_{0}$ and $ {\rm T}_{1}\to {\rm T}'_{1}$.
So, for example, $(0, \mu)$ can not degenerate to $(\lambda, 0)$ unless $\lambda=0$.

\end{remark}

Fourthly, to prove ${{\rm T} }(*)\to {{\rm T} }'$, suppose that ${{\rm T} }(\alpha)$ is represented by the structure $(\mu(\alpha),\mu'(\alpha))\in{\mathcal{T} }_n$ for $\alpha\in I$ and ${{\rm T} }'$ is represented by the structure $(\lambda, \lambda')\in{\mathcal{T} }_n$. Let $c_{i,j}^k, c_{i,j,k}^l$ be the structure constants of $(\lambda, \lambda')$ in a basis  $e_1,\dots, e_n$ of ${\mathbb V}$. If there is a pair of maps $(f, (a_i^j))$, where $f:\mathbb{C}^*\to I$ and $a_i^j:\mathbb{C}^*\to \mathbb{C}$ are such that $E_i(t)=\sum_{j=1}^na_i^j(t)e_j$ ($1\le i\le n$) form a basis of ${\mathbb V}$ for any  $t\in\mathbb{C}^*$ and the structure constants $c_{i,j}^k(t), c_{i,j,k}^l(t)$ of $(\mu\big(f(t)\big),\mu'\big(f(t)\big))$ in the basis $E_1(t),\dots, E_n(t)$ satisfy $\lim\limits_{t\to 0}c_{i,j}^k(t)=c_{i,j}^k$ and $\lim\limits_{t\to 0}c_{i,j,k}^l(t)=c_{i,j,k}^l$, then ${{\rm T} }(*)\to {{\rm T} }'$. In this case  $E_1(t),\dots, E_n(t)$ and $f(t)$ are called a parametrized basis and a parametrized index for ${{\rm T} }(*)\to {{\rm T} }'$, respectively.
Fithly, to prove ${{\rm T} }(*)\not \to {{\rm T} }'$, we may use an analogous of Remark \ref{redbil} for parametric families that follows from Lemma \ref{main2}, see \cite{afm}.

\begin{lemma}\label{main2}
\begin{sloppypar}
Consider the family of Lie--Yamaguti (Bol)  algebras ${\rm T}(*)$ and the Lie--Yamaguti (Bol)  algebra ${\rm T}'$. Suppose ${\rm T}(*) \to {\rm T}'$. Let ${\mathcal{Z}}$  be a Zariski closed set in $\mathcal{T}_n$ that is stable by the action of the invertible upper (lower) triangular matrices. Then if there is a representation $(\mu(\alpha), \mu'(\alpha))$ of ${\rm T}(\alpha)$ in ${\mathcal{Z}}$ for every $\alpha\in I$, then there is a representation $(\lambda, \lambda')$ of ${\rm T}'$ in ${\mathcal{Z}}$.
\end{sloppypar}
\end{lemma}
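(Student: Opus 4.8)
The plan is to reduce the family statement to a single closedness fact about the group action: for a Zariski-closed set $\mathcal{Z}$ that is stable under the Borel subgroup $B\subset\mathrm{GL}(\mathbb{V})$ of invertible upper-triangular matrices, the saturation $\mathrm{GL}(\mathbb{V})\cdot\mathcal{Z}$ is again Zariski closed in $\mathcal{T}_n$. Granting this, the lemma follows quickly. First I would set $Y:=\bigcup_{\alpha\in I}O((\mu(\alpha),\mu'(\alpha)))$, which is a $\mathrm{GL}(\mathbb{V})$-stable subset of $\mathcal{T}_n$ whose closure contains $(\lambda,\lambda')$ by the hypothesis $\mathrm{T}(*)\to\mathrm{T}'$. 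By assumption each orbit $O((\mu(\alpha),\mu'(\alpha)))$ meets $\mathcal{Z}$; since an orbit is $\mathrm{GL}(\mathbb{V})$-stable and meets $\mathcal{Z}$, it is contained in $\mathrm{GL}(\mathbb{V})\cdot\mathcal{Z}$. Hence $Y\subseteq\mathrm{GL}(\mathbb{V})\cdot\mathcal{Z}$, and once the latter is known to be closed we get $\overline{Y}\subseteq\mathrm{GL}(\mathbb{V})\cdot\mathcal{Z}$. Therefore $(\lambda,\lambda')\in\mathrm{GL}(\mathbb{V})\cdot\mathcal{Z}$, so $(\lambda,\lambda')=g\cdot z$ with $g\in\mathrm{GL}(\mathbb{V})$ and $z\in\mathcal{Z}$; then $z=g^{-1}\cdot(\lambda,\lambda')$ is a representation of $\mathrm{T}'$ lying in $\mathcal{Z}$, which is exactly the conclusion.

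The heart of the matter, and the step I expect to be the main obstacle, is proving that $\mathrm{GL}(\mathbb{V})\cdot\mathcal{Z}$ is closed when $\mathcal{Z}$ is closed and $B$-stable; this is precisely where the hypothesis of triangular (rather than arbitrary linear) stability is used. Here I would invoke the completeness (projectivity) of the flag variety $\mathrm{GL}(\mathbb{V})/B$. Because $\mathcal{Z}$ is $B$-stable, the associated-bundle construction $\mathrm{GL}(\mathbb{V})\times_B\mathcal{Z}$ makes sense and embeds as a closed subvariety of $\mathrm{GL}(\mathbb{V})\times_B\mathcal{T}_n\cong\mathcal{T}_n\times\mathrm{GL}(\mathbb{V})/B$, the isomorphism being $[g,x]\mapsto(g\cdot x,gB)$; under it the image of $\mathrm{GL}(\mathbb{V})\times_B\mathcal{Z}$ is the set $\{(y,gB):g^{-1}\cdot y\in\mathcal{Z}\}$, which is well defined and closed precisely because of the $B$-stability of $\mathcal{Z}$. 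Composing with the projection $\mathcal{T}_n\times\mathrm{GL}(\mathbb{V})/B\to\mathcal{T}_n$, which is a closed (proper) map since $\mathrm{GL}(\mathbb{V})/B$ is complete, shows that the image $\mathrm{GL}(\mathbb{V})\cdot\mathcal{Z}$ of this closed set is closed, as needed.

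Finally I would note that the same argument applies verbatim with the Borel subgroup of lower-triangular matrices, since every Borel is conjugate to $B$ and the quotient $\mathrm{GL}(\mathbb{V})/B$ is complete in each case; this accounts for the ``upper (lower)'' alternative in the statement. The single-algebra version used before Remark \ref{redbil} is the special case in which the index set $I$ is a singleton, and the only genuinely new ingredient relative to it is replacing the single orbit by the $\mathrm{GL}(\mathbb{V})$-stable union $Y$. The closedness of $\mathrm{GL}(\mathbb{V})\cdot\mathcal{Z}$ is exactly what permits passing to the closure $\overline{Y}$ without losing the property of meeting $\mathcal{Z}$, so no convergence-of-parametrized-bases computation is required for this lemma itself.
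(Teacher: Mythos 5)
The paper itself gives no proof of Lemma \ref{main2}: it is stated with the citation ``see \cite{afm}'', so there is no in-paper argument to compare against. Your proof is correct, and it is precisely the standard argument behind results of this kind in the degeneration literature (in essence the one in the cited source). The entire content is your key claim that the $\mathrm{GL}(\mathbb{V})$-saturation of a Zariski closed set stable under a Borel subgroup $B$ is again closed, which you establish via the identification $\mathrm{GL}(\mathbb{V})\times_B\mathcal{T}_n\cong\mathcal{T}_n\times\mathrm{GL}(\mathbb{V})/B$ and the completeness of the flag variety; the reduction of the family statement to this fact (each orbit meeting $\mathcal{Z}$ lies in $\mathrm{GL}(\mathbb{V})\cdot\mathcal{Z}$, then pass to the closure of the union $Y$) is also correct, and your handling of the upper/lower alternative is fine. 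The only compressed step is the closedness of $\{(y,gB):g^{-1}\cdot y\in\mathcal{Z}\}$ in $\mathcal{T}_n\times\mathrm{GL}(\mathbb{V})/B$: strictly, one should observe that the closed, right-$B$-saturated set $\{(y,g):g^{-1}\cdot y\in\mathcal{Z}\}\subseteq\mathcal{T}_n\times\mathrm{GL}(\mathbb{V})$ has closed image under the quotient map $\mathrm{id}\times\pi$ because $\pi$ is open and the set is a union of fibers (or invoke Zariski-local triviality of $\mathrm{GL}(\mathbb{V})\to\mathrm{GL}(\mathbb{V})/B$), but this is routine and does not affect the validity of the argument.
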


Finally, the following remark simplifies the geometric problem.

\begin{remark}
 Let $(\mu, \mu')$ and $(\lambda, \lambda')$ represent two Lie--Yamaguti algebras. Suppose $(\lambda, 0)\not\in\overline{O((\mu, 0))}$, $($resp. $(0, \lambda')\not\in\overline{O((0, \mu'))}$$)$, then $(\lambda, \lambda')\not\in\overline{O((\mu, \mu'))}$.
  As we construct the classification of Lie--Yamaguti (Bol)  algebras from a certain class of algebras with a single multiplication which remains unchanged, this remark becomes very useful.
\end{remark}

\subsection{The geometric classification of Lie--Yamaguti   algebras}

In this subsection, we determine all the irreducible components of the variety of four-dimensional nilpotent Lie--Yamaguti   algebras.

\begin{theorem}\label{geo1}
The variety of four-dimensional nilpotent
Lie--Yamaguti algebras has
dimension  $14$ and it has  $3$  irreducible components defined by
\begin{center}
$\mathcal{C}_1=\overline{O({\mathcal L}_{19}^{\alpha})},$ \
$\mathcal{C}_2=\overline{O({\mathcal L}_{29})},$ \ and \
$\mathcal{C}_3=\overline{O({\mathcal L}_{44}^{\alpha})}.$ \

\end{center}
In particular, ${\mathcal L}_{29}$ is a rigid algebra.

\end{theorem}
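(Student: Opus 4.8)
The plan is to combine a dimension count of the $\mathrm{GL}_4(\mathbb{C})$-orbits with an explicit web of degenerations. First I would compute the algebra of derivations $\mathfrak{Der}(\mathcal{L})$ for every algebra $\mathcal{L}_{01},\dots,\mathcal{L}_{52}$ in the algebraic classification and read off the orbit dimensions from $\dim O(\mathcal{L}) = 16 - \dim\mathfrak{Der}(\mathcal{L})$. I expect this to show that the largest orbit dimension attained is $14$, realized by $\mathcal{L}_{29}$, while the two one-parameter families $\mathcal{L}_{19}^{\alpha}$ and $\mathcal{L}_{44}^{\alpha}$ have generic orbit dimension $13$, so that each of the three closures $\mathcal{C}_1,\mathcal{C}_2,\mathcal{C}_3$ is $14$-dimensional and $\dim\mathcal{T}_4 = 14$. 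Since a proper degeneration strictly increases $\dim\mathfrak{Der}$, an algebra whose orbit already has the maximal dimension $14=\dim\mathcal{T}_4$ cannot lie in the closure of any orbit of smaller dimension, nor in the closure of any family whose generic orbit has dimension $\le 13$; hence $\mathcal{L}_{29}$ is automatically rigid and $\mathcal{C}_2=\overline{O(\mathcal{L}_{29})}$ is an irreducible component.

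For the two families I would argue that no algebra and no other family degenerates to a generic member, so that $\mathcal{C}_1$ and $\mathcal{C}_3$ are irreducible components as well. Here the main tool is the reduction in Remark \ref{redbil}, which forces the binary parts and the ternary parts to degenerate separately. Comparing the binary parts — a $2$-step nilpotent Lie algebra for $\mathcal{L}_{19}^{\alpha}$ against $3$-step binary parts for $\mathcal{L}_{29}$ and $\mathcal{L}_{44}^{\alpha}$ — together with the ternary parts already blocks many candidate degenerations by nilpotency-class and rank considerations, and in particular separates $\mathcal{L}_{19}^{\alpha}$ from the other two candidates. To exclude the remaining potential degenerations into $\mathcal{C}_1$ and $\mathcal{C}_3$ I would exhibit, for each candidate, an explicit Zariski-closed set $\mathcal{Z}$ stable under the invertible upper (or lower) triangular matrices that contains a representative of every possible source but no representative of the generic member of the target family, and invoke Lemma \ref{main2} together with its single-algebra analogue. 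Describing these $\mathcal{Z}$ explicitly in the structure constants $c_{i,j}^k, c_{i,j,k}^l$ is the delicate point, since the invariant must be fine enough to detect the parameter $\alpha$.

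It then remains to show that the three closures cover the whole variety, that is, that every other algebra in the classification degenerates into $\mathcal{C}_1$, $\mathcal{C}_2$ or $\mathcal{C}_3$. I would organize this by the isomorphism type of the binary part, using Remark \ref{redbil} as a sorting principle: an algebra may only degenerate into a candidate whose binary part dominates its own, which assigns most algebras to a unique target, and then I would write down an explicit parametrized basis $E_1(t),\dots,E_4(t)$ (together with a parametrized index $f(t)$ when the source is itself a one-parameter family) realizing each degeneration and verify the required limits $\lim_{t\to 0}c_{i,j}^k(t)=c_{i,j}^k$ and $\lim_{t\to 0}c_{i,j,k}^l(t)=c_{i,j,k}^l$.

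The main obstacle is twofold. The laborious part is producing the roughly fifty explicit degenerations needed for the covering step, including the subtle ones where a parameter of the source family must be driven to $0$ or to $\infty$ along the parametrized basis. The genuinely delicate part, however, is the non-degeneration step for the families: choosing the triangular-stable Zariski-closed sets $\mathcal{Z}$ that certify that $\mathcal{C}_1$ and $\mathcal{C}_3$ are components, because the derivation-dimension inequality alone cannot distinguish a $13$-dimensional generic orbit from the other orbits of the same dimension, so one is forced to construct $\alpha$-sensitive polynomial invariants by hand.
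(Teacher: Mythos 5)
Your proposal is correct in substance and coincides with the paper's proof on two of its three ingredients: the orbit-dimension computations (the paper records $\dim O(\mathcal{L}_{19}^{\alpha})=\dim O(\mathcal{L}_{29})=\dim O(\mathcal{L}_{44}^{\alpha})=14$, families counted together with their parameter, exactly matching your expectation of $13$-dimensional generic orbits plus one parameter) and the long table of explicit parametrized bases realizing the covering degenerations. Where you diverge is the non-degeneration step, and here you over-engineer: the paper's proof of this theorem contains no non-degeneration argument at all, and none is needed. Once the three closures $\mathcal{C}_1,\mathcal{C}_2,\mathcal{C}_3$ are known to be irreducible, of the same dimension $14$, and to cover the variety, mutual non-containment is automatic: containment between irreducible closed sets of equal dimension forces equality, and equality is excluded because $O(\mathcal{L}_{29})$ (resp.\ the union of orbits of a family) is dense and constructible in its closure, so coincidence of two of the closures would force an isomorphism between algebras that the algebraic classification lists as pairwise non-isomorphic. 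This is essentially the argument you already invoke to prove $\mathcal{L}_{29}$ rigid --- note, incidentally, that for exclusion from a \emph{family} closure you cannot literally quote the derivation-dimension inequality, which concerns single orbits; one instead observes that the boundary of the constructible union of family orbits is a proper closed subset of an irreducible $14$-dimensional set, hence of dimension at most $13$, too small to contain a $14$-dimensional orbit. Applied symmetrically to $\mathcal{C}_1$ and $\mathcal{C}_3$, the same genericity argument makes your ``genuinely delicate part'' --- hand-constructing $\alpha$-sensitive, triangular-stable Zariski-closed sets and invoking Lemma \ref{main2} --- entirely dispensable; you are not ``forced'' to build such invariants. That machinery is needed in this paper only for the Bol variety (Theorem \ref{geo2}), where the two candidate components have \emph{different} dimensions ($15$ and $14$), the equal-dimension shortcut fails, and the explicit set $\mathcal{R}=\{c_{1,2,1}^{3}=0,\ c_{2,3,2}^{4}=0\}$ must be produced to rule out $\mathfrak{B}_{06}^{\alpha}\not\to\mathfrak{B}_{10}$. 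Your Remark~\ref{redbil} rank observations are correct as far as they go (and do block, e.g., $\mathcal{L}_{19}^{(*)}\to\mathcal{L}_{29}$ via the binary part and $\mathcal{L}_{29}\to\mathcal{L}_{44}^{\alpha}$ via the ternary rank), but note they only work in one direction and would not by themselves handle degenerations \emph{into} $\mathcal{C}_1$; the equal-dimension argument handles every case uniformly, which is what makes the paper's proof so short.
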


\begin{proof}
After carefully  checking  the dimensions of orbit closures of the more important algebras, we have

\begin{center}
$\dim  O({\mathcal L}_{19}^{\alpha})=\dim  O({\mathcal L}_{29})=\dim O({\mathcal L}_{44}^{\alpha})=14.$
\end{center}

As the orbit dimensions of these three algebras coincide, no degeneration occurs among them.

The following degenerations are observed:

\begin{longtable}{lcl} \hline

 $\mathcal{L}_{44}^0$ & $\xrightarrow{ (te_1, e_2, te_3+te_4, e_4)}$ & $\mathcal{L}_{01}$ \\ \hline

  $\mathcal{L}_{44}^0$ & $\xrightarrow{ (t^{-1}e_1, t^3e_2, te_3+te_4, e_4)}$ & $\mathcal{L}_{02}$
\\  \hline

$\mathcal{L}_{44}^0$ & $\xrightarrow{ (e_1, te_2, te_3+te_4, e_4)}$ & $\mathcal{L}_{03}$ \\ \hline

$\mathcal{L}_{15}$ & $\xrightarrow{ (t^{2}e_1, te_2, e_3, t^{2}e_4)}$ & $\mathcal{L}_{04}$
\\  \hline

$\mathcal{L}_{15}$ & $\xrightarrow{ (t^{3}e_1, te_2+e_3, t(t+1)e_3, t^{3}(t+1)e_4)}$ & $\mathcal{L}_{05}$

\\  \hline

$\mathcal{L}_{07}$ & $\xrightarrow{ (t^{2}e_1, te_2, e_3, t^{2}e_4)}$ & $\mathcal{L}_{06}$
\\  \hline
$\mathcal{L}_{19}^0$ & $\xrightarrow{ (t^3e_1, -t^2e_1-2t^2e_3, -te_1+te_2-te_3, -2t^5e_4)}$ & $\mathcal{L}_{07}$

 \\  \hline

$\mathcal{L}_{19}^2$ & $\xrightarrow{ (-\frac{5t^2}{64}e_1, \frac{t}{16}e_1+te_3, -\frac{1}{4}e_1+\frac{1}{3}e_2+e_3, -\frac{5t^2}{48}e_4)}$ & $\mathcal{L}_{08}$

 \\  \hline

$\mathcal{L}_{19}^{-1}$ & $\xrightarrow{ (\frac{2t^2}{1-2t}e_1, \frac{2t}{1-2t}e_1+te_3, \frac{1}{1-2t}e_1-e_2+e_3, \frac{2t^2}{2t-1}e_4)}$ & $\mathcal{L}_{09}$

 \\  \hline

$\mathcal{L}_{15}$ & $\xrightarrow{ (t^{-1}e_1, t^{-1}e_2, t^{-1}e_3, t^{-3}e_4)}$ & $\mathcal{L}_{10}$
\\  \hline

$\mathcal{L}_{15}$ & $\xrightarrow{ (e_1, e_2, t^{-1}e_3, t^{-1}e_4)}$ & $\mathcal{L}_{11}$
\\ \hline

 $\mathcal{L}_{15}$ & $\xrightarrow{ (-\frac{1}{2}(t+2)e_2, \frac{(t+1)(t+2)^2}{t^2}e_1+\frac{t^2+3t+2}{t}e_2, -\frac{t+2}{2t}e_3, -\frac{(t+1)(t+2)^3}{2t^3}e_4)}$ & $\mathcal{L}_{12}$

 \\  \hline

$\mathcal{L}_{15}$ & $\xrightarrow{ (t^{-1}e_1, t^{-1}e_2, e_3, t^{-2}e_4)}$ & $\mathcal{L}_{13}$

\\  \hline

 $\mathcal{L}_{15}$ & $\xrightarrow{ (-\frac{1}{2}(t+2)e_2, \frac{(t+2)^2}{t^2}e_1-\frac{t+2}{t}e_2, \frac{t+2}{2}e_3, \frac{(t+2)^3}{2t^2}e_4)}$ & $\mathcal{L}_{14}$

 \\  \hline

 $\mathcal{L}_{19}^{1+t}$ & $\xrightarrow{ (-\frac{2t}{(2+t)^2}e_1-\frac{2t}{(2+t)^2}e_2, -\frac{2}{2+t}e_1+\frac{2}{2+t}e_2, \frac{2}{2+t}e_3, -\frac{8t}{(2+t)^3}e_4)}$ & $\mathcal{L}_{15}$

 \\  \hline

$\mathcal{L}_{19}^{\alpha}$ & $\xrightarrow{ (t^{-1}e_1, t^{-1}e_2, t^{-1}e_3, t^{-3}e_4)}$ & $\mathcal{L}_{16}^{\alpha}$ \\ \hline

$\mathcal{L}_{19}^{\alpha}$ & $\xrightarrow{ (t^{-1}e_1, t^{-1}e_2, e_3, t^{-2}e_4)}$ & $\mathcal{L}_{17}^{\alpha}$
\\  \hline

 $\mathcal{L}_{19}^{\alpha}$ & $\xrightarrow{ (t^{-1}e_1, e_2, e_3, t^{-1}e_4)}$ & $\mathcal{L}_{18}^{\alpha}$ \\  \hline

$\mathcal{L}_{44}^{\frac{1}{t^2-\alpha}}$ & $\xrightarrow{ (te_1, \frac{t^2}{t^2-\alpha }e_2, \frac{t^3}{t^2-\alpha}e_3-\frac{\alpha t^3}{(t^2-\alpha)^2}e_4, \frac{t^4}{(t^2-\alpha)^2}e_4)}$ & $\mathcal{L}_{20}^{\alpha}$

 \\  \hline

 $\mathcal{L}_{44}^{0}$ & $\xrightarrow{ (te_1, -te_2, -t^2e_3-t^2e_4, t^{3}e_4)}$ & $\mathcal{L}_{21}$
\\  \hline

$\mathcal{L}_{23}^{0}$ & $\xrightarrow{ (te_1, e_2, te_3, te_4)}$ & $\mathcal{L}_{22}$ \\ \hline

$\mathcal{L}_{29}$ & $\xrightarrow{ (-\frac{2t^2}{\alpha}e_1-\frac{2t^2}{\alpha^2}e_2, -\frac{t}{\alpha}e_1+\frac{t}{\alpha }e_2+\frac{2t}{\alpha^2}e_3, -\frac{2t^3}{\alpha^2}e_3-\frac{4t^3}{\alpha^3}e_4, \frac{4t^5}{\alpha^4}e_4)}$ & $\mathcal{L}_{23}^{\alpha}$

 \\  \hline

$\mathcal{L}_{23}^{t}$ & $\xrightarrow{ (e_1+t^{-1}e_3, t^{-1}e_1+e_2, e_3-t^{-1}e_4, e_4)}$ & $\mathcal{L}_{24}$

\\  \hline

$\mathcal{L}_{23}^{t^{-1}}$ & $\xrightarrow{ (te_1, e_2, te_3, te_4)}$ & $\mathcal{L}_{25}$  \\ \hline

$\mathcal{L}_{23}^{1}$ & $\xrightarrow{ (te_1+te_3, e_1+e_2, te_3-te_4, te_4)}$ & $\mathcal{L}_{26}$

\\  \hline

$\mathcal{L}_{29}$ & $\xrightarrow{ (t^{-1}e_1, t^{-1}e_2, t^{-2}e_3, t^{-4}e_4)}$ & $\mathcal{L}_{27}$  \\ \hline

$\mathcal{L}_{29}$ & $\xrightarrow{ (t^{-1}e_1, e_2, t^{-1}e_3, t^{-2}e_4)}$ & $\mathcal{L}_{28}$

\\  \hline

$\mathcal{L}_{44}^{1}$ & $\xrightarrow{ (t^{-1}e_1, te_2, t^{-1}e_3, t^{-2}e_4)}$ & $\mathcal{L}_{30}$  \\ \hline $\mathcal{L}_{44}^{0}$ & $\xrightarrow{ (t^{-1}e_1, e_2, t^{-2}e_3, -t^{-3}e_4)}$ & $\mathcal{L}_{31}$

\\  \hline

$\mathcal{L}_{44}^{0}$ & $\xrightarrow{ (t^{-1}e_1, -t^2e_2, -e_3, -te_4)}$ & $\mathcal{L}_{32}$ \\ \hline

$\mathcal{L}_{44}^{-t}$ & $\xrightarrow{ (t^{-1}e_1, te_2, t^{-1}e_3, -t^{-1}e_4)}$ & $\mathcal{L}_{33}$

\\  \hline

$\mathcal{L}_{44}^{t^{2}}$ & $\xrightarrow{ (t^{-1}e_1, -t^2e_2, -e_3, -te_4)}$ & $\mathcal{L}_{34}$  \\ \hline

$\mathcal{L}_{44}^{2t^{3}}$ & $\xrightarrow{ (t^{-1}e_1-2t^2e_2, -2t^3e_2-e_3, -2te_3-2te_4, -4t^2e_4)}$ & $\mathcal{L}_{35}$

  \\  \hline

$\mathcal{L}_{44}^{\frac{2t^4+2t^3}{t-1}}$ & $\xrightarrow{ (\frac{1}{t}e_1+\frac{2t^3}{1-t}e_2, \frac{2t^4}{1-t}e_2+\frac{t}{1-t}e_3, \frac{2t^2}{1-t}e_3-\frac{2t^3}{(1-t)^2}e_4, -\frac{4t^4}{(1-t)^2}e_4)}$ & $\mathcal{L}_{36}$

  \\  \hline
$\mathcal{L}_{44}^{-10t^{3}}$ & $\xrightarrow{ (t^{-1}e_1+2t^2e_2, 2t^3e_2+e_3, 2te_3-2te_4, -4t^2e_4)}$ & $\mathcal{L}_{37}$

  \\  \hline

$\mathcal{L}_{44}^{-t^{2}}$ & $\xrightarrow{ (t^{-1}e_1+te_2, t^2e_2, e_3, -e_4)}$ & $\mathcal{L}_{38}$

\\  \hline

$\mathcal{L}_{44}^{t^{3}}$ & $\xrightarrow{ (t^{-1}e_1-t^2e_2, -t^3e_2, -te_3, -t^2e_4)}$ & $\mathcal{L}_{39}$ \\ \hline

$\mathcal{L}_{44}^{1+t}$ & $\xrightarrow{ (t^{-1}e_1-e_2, -te_2, -t^{-1}e_3, -t^{-2}e_4)}$ & $\mathcal{L}_{40}$

\\  \hline

$\mathcal{L}_{44}^{-2t^3}$ & $\xrightarrow{ (t^{-1}e_1, 2t^4e_2+te_3, 2t^{2}e_3, -4t^{4}e_4)}$ & $\mathcal{L}_{41}$ \\ \hline

$\mathcal{L}_{44}^{0}$ & $\xrightarrow{ (e_1, t^{-1}e_2, t^{-1}e_3, t^{-2}e_4)}$ & $\mathcal{L}_{42}$
\\  \hline

$\mathcal{L}_{44}^{t^{-1}}$ & $\xrightarrow{ (e_1, t^{-1}e_2, t^{-1}e_3, t^{-2}e_4)}$ & $\mathcal{L}_{43}$ \\ \hline

$\mathcal{L}_{44}^{-\frac{1+\alpha}{t}}$ & $\xrightarrow{ (e_1+\frac{\alpha}{t}e_2, e_2, e_3, -t^{-1}e_4)}$ & $\mathcal{L}_{45}^{\alpha}$
\\  \hline

$\mathcal{L}_{44}^{\frac{2}{\alpha}}$ & $\xrightarrow{ (e_1-\frac{1}{\alpha}e_2, -\frac{t}{\alpha}e_2, -\frac{t}{\alpha}e_3, -\frac{t}{\alpha^{2}}e_4)}$ & $\mathcal{L}_{46}^{\alpha\neq0}$

  \\  \hline

$\mathcal{L}_{44}^{\frac{2(\alpha+1)t}{\alpha-1}}$ & $\xrightarrow{ (e_1-\frac{2\alpha t}{\alpha-1}e_2, \frac{2t^2}{1-\alpha}e_2+\frac{t}{1-\alpha}e_3, \frac{2t^2}{1-\alpha}e_3-\frac{2\alpha t^2}{(1-\alpha)^2}e_4, -\frac{4 t^3}{(1-\alpha)^2}e_4)}$ & $\mathcal{L}_{47}^{\alpha\neq 1}$

  \\  \hline

$\mathcal{L}_{44}^{-t^{-1}}$ & $\xrightarrow{ (e_1+t^{-1}e_2, e_2, e_3, -t^{-1}e_4)}$ & $\mathcal{L}_{48}$ \\ \hline

$\mathcal{L}_{44}^{2t}$ & $\xrightarrow{ (e_1-2te_2+e_3, -2t^2e_2-te_3, -2t^2e_3, -4t^{3}e_4)}$ & $\mathcal{L}_{49}$

\\  \hline

$\mathcal{L}_{44}^{0}$ & $\xrightarrow{ (t^{-1}e_1, 2t^2e_2+t^{-1}e_3, 2e_3, -4e_4)}$ & $\mathcal{L}_{50}$

\\  \hline

$\mathcal{L}_{44}^{0}$ & $\xrightarrow{ (t^{-1}e_1, t^3e_2, te_3, t^2e_4)}$ & $\mathcal{L}_{51}$ \\  \hline

$\mathcal{L}_{44}^{0}$ & $\xrightarrow{ (e_1, 2e_2+t^{-1}e_3, 2e_3, -4t^{-1}e_4)}$ & $\mathcal{L}_{52}$

\\  \hline

\end{longtable}

\end{proof}

\subsection{The geometric classification of Bol algebras}

In this section, we determine all the irreducible components of the variety of four-dimensional nilpotent  Bol algebras.

\begin{theorem}\label{geo2}
The variety of four-dimensional nilpotent
Bol algebras has dimension $15$ and it has  $2$  irreducible components defined by
\begin{center}
$\mathcal{C}_1=\overline{O({\mathfrak B}_{06}^{\alpha})}$ \ and \
$\mathcal{C}_2=\overline{O({\mathfrak B}_{10})}.$

\end{center}
In particular, ${\mathfrak B}_{10}$ is a rigid algebra.

\end{theorem}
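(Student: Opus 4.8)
The plan is to mirror the strategy of Theorem~\ref{geo1}, keeping in mind that, by the algebraic classification, the four-dimensional nilpotent Bol algebras are exactly $\mathcal{L}_{01},\dots,\mathcal{L}_{41},\mathcal{L}_{45}^{\alpha},\dots,\mathcal{L}_{52}$ together with $\mathfrak{B}_{01},\dots,\mathfrak{B}_{10}$; in particular $\mathcal{L}_{42},\mathcal{L}_{43},\mathcal{L}_{44}^{\alpha}$ do \emph{not} lie in the Bol variety. First I would compute $\mathfrak{Der}(\mathfrak{B}_{06}^{\alpha})$ and $\mathfrak{Der}(\mathfrak{B}_{10})$ and use $\dim O((\mu,\mu'))=16-\dim\mathfrak{Der}$ to obtain $\dim O(\mathfrak{B}_{06}^{\alpha})=\dim O(\mathfrak{B}_{10})=15$, where for the family this denotes the dimension of the closure of the union of its orbits, the extra unit coming from the parameter. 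Checking that no algebra of the list has orbit closure of dimension exceeding $15$ then shows the variety has dimension $15$ and that $\mathcal{C}_1$ and $\mathcal{C}_2$ are candidate top-dimensional components.

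Next I would exhibit explicit degenerations, given by parametrized bases exactly as in Theorem~\ref{geo1}, showing that every algebra of the list lies in $\mathcal{C}_1\cup\mathcal{C}_2$. A degeneration realised by a parametrized basis automatically stays inside the Bol variety, since each $g\ast(\mu,\mu')$ is isomorphic to $(\mu,\mu')$ and the limit lies in the closed Bol variety; hence the chains of Theorem~\ref{geo1} that pass \emph{only} through Bol algebras may be reused verbatim. Concretely, the degenerations there issuing from $\mathcal{L}_{19}^{\alpha}$ and $\mathcal{L}_{29}$ (both Bol) already deliver the targets $\mathcal{L}_{04}$--$\mathcal{L}_{18}$ and $\mathcal{L}_{22}$--$\mathcal{L}_{28}$, whereas the chains issuing from $\mathcal{L}_{44}^{\alpha}$ must be discarded, their source not being a Bol algebra. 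It then remains to produce, directly from the two components, parametrized bases for $\mathfrak{B}_{06}^{\alpha}\to\mathcal{L}_{19}^{\alpha}$, for $\mathfrak{B}_{06}^{\alpha}\to\mathfrak{B}_{01},\dots,\mathfrak{B}_{05}$, and for the small algebras $\mathcal{L}_{01},\mathcal{L}_{02},\mathcal{L}_{03}$, on the one hand; and $\mathfrak{B}_{10}\to\mathcal{L}_{29}$, $\mathfrak{B}_{10}\to\mathfrak{B}_{07},\mathfrak{B}_{08},\mathfrak{B}_{09}$, and $\mathfrak{B}_{10}\to\mathcal{L}_{30}$--$\mathcal{L}_{41},\mathcal{L}_{45}^{\alpha}$--$\mathcal{L}_{52}$ on the other. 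This last block is natural, since $\mathfrak{B}_{10}$ carries the distinguished relation $[e_1,e_2,e_1]=e_3$ shared by that whole block. Composing these with the reusable chains covers the whole list, so no further components arise.

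The heart of the matter, and the step I expect to be the main obstacle, is to separate the two candidate components by proving the non-degenerations
\[
\mathfrak{B}_{06}^{\alpha}\not\to\mathfrak{B}_{10}\qquad\text{and}\qquad\mathfrak{B}_{10}\not\to\mathfrak{B}_{06}^{\alpha},
\]
the first understood in the parametric sense of the family $\mathfrak{B}_{06}^{\alpha}$. Crude invariants are useless here: both components have dimension $15$, and the binary parts of $\mathfrak{B}_{06}^{\alpha}$ and of $\mathfrak{B}_{10}$ are each isomorphic to the four-dimensional filiform nilpotent Lie algebra, so Remark~\ref{redbil} applied to the binary part alone cannot tell them apart. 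I would therefore construct, for each non-degeneration, an explicit Zariski-closed set $\mathcal{Z}\subseteq\mathcal{T}_4$, stable under the upper (resp. lower) triangular group and cut out by polynomial conditions on the \emph{joint} structure constants $c_{i,j}^{k},c_{i,j,k}^{l}$, that contains a representative of the source but no representative of the target; Lemma~\ref{main2} (resp. its non-parametric counterpart stated just before it) then yields the non-degeneration. The delicate point is to choose $\mathcal{Z}$ so that it simultaneously respects the mixed binary/ternary coupling encoded in identity $(\mathrm{B5})$ and excludes the target: a workable candidate is a set defined by vanishing and rank conditions linking the ternary products landing in $\mathcal{L}^{(2)}$ with the image of the binary bracket, since it is precisely this coupling (present through $[e_1,e_2,e_1]=e_3$ in $\mathfrak{B}_{10}$, absent in $\mathfrak{B}_{06}^{\alpha}$) that records the genuine difference between the two families.

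Once both non-degenerations are established, $\mathcal{C}_1$ and $\mathcal{C}_2$ are irreducible, mutually non-contained, and of maximal dimension, hence distinct irreducible components; together with the covering degenerations of the second step they exhaust the variety, so these are all the components. Finally, the openness of $O(\mathfrak{B}_{10})$---equivalently, that no algebra and no parametric family of the list degenerates into $\mathfrak{B}_{10}$, which is exactly the content of $\mathfrak{B}_{06}^{\alpha}\not\to\mathfrak{B}_{10}$ together with a routine check against the remaining strictly smaller orbits---gives the rigidity of $\mathfrak{B}_{10}$.
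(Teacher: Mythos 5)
Your overall strategy matches the paper's (reuse the $\mathcal{L}_{19}^{\alpha}$- and $\mathcal{L}_{29}$-chains from Theorem~\ref{geo1}, add new degenerations from $\mathfrak{B}_{06}^{\alpha}$ and $\mathfrak{B}_{10}$, and separate the two candidate components by a triangular-stable Zariski-closed set), but the proposal contains genuine errors. The central one is the dimension count: $\dim O(\mathfrak{B}_{10})$ is $14$, not $15$, because $\mathfrak{Der}(\mathfrak{B}_{10})$ is two-dimensional (it is spanned by $e_1\mapsto e_4$ and $e_2\mapsto e_4$; the ternary relations $[e_1,e_2,e_1]=e_3$ and $[e_1,e_2,e_3]=e_4$ force the diagonal entries $d_{11}=d_{22}=0$, killing the toral part that a plain filiform Lie algebra would have). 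This breaks your component argument in both directions. First, $\mathcal{C}_2$ is \emph{not} top-dimensional, so ``irreducible, mutually non-contained, of maximal dimension'' does not apply to it: the only reason $\overline{O(\mathfrak{B}_{10})}$ is a component is that nothing degenerates to $\mathfrak{B}_{10}$, i.e.\ exactly the non-degeneration $\mathfrak{B}_{06}^{\alpha}\not\to\mathfrak{B}_{10}$ (which the paper settles with the purely ternary set $\mathcal{R}=\{c_{1,2,1}^{3}=0,\ c_{2,3,2}^{4}=0\}$, stable under triangular action). Second, the converse non-degeneration $\mathfrak{B}_{10}\not\to\mathfrak{B}_{06}^{\alpha}$, which you single out as half of ``the heart of the matter'' and plan to prove by constructing another invariant closed set, needs no such argument: a proper degeneration strictly decreases orbit dimension, and $\dim O(\mathfrak{B}_{10})=14=\dim O(\mathfrak{B}_{06}^{\alpha_0})$ for each fixed $\alpha_0$ (equivalently, $\dim\mathcal{C}_1=15>14=\dim\mathcal{C}_2$ rules out $\mathcal{C}_1\subseteq\mathcal{C}_2$). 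So one of your two hard steps is impossible to avoid and the other is free.

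The same dimension fact breaks your covering plan. You propose $\mathfrak{B}_{10}\to\mathcal{L}_{29}$, but $\mathcal{L}_{29}$ also has orbit dimension $14$ (it is the rigid algebra of Theorem~\ref{geo1}), so no proper degeneration from $\mathfrak{B}_{10}$ to $\mathcal{L}_{29}$ can exist; in the paper $\mathcal{L}_{29}$ is obtained from the \emph{family}, via $\mathfrak{B}_{06}^{(t+1)/t}\to\mathcal{L}_{29}$ with a parametrized index, and likewise $\mathcal{L}_{19}^{\alpha}$, $\mathfrak{B}_{07}^{\alpha}$, $\mathfrak{B}_{08}^{\alpha}$ (which you assign to $\mathfrak{B}_{10}$) come from $\mathfrak{B}_{06}^{\alpha}$. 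Finally, your covering omits $\mathcal{L}_{20}^{\alpha}$ and $\mathcal{L}_{21}$ altogether: in Theorem~\ref{geo1} these were reached from $\mathcal{L}_{44}^{\alpha}$, whose chains you correctly discard, so they must be re-derived (the paper produces $\mathfrak{B}_{10}\to\mathcal{L}_{20}^{\alpha}$ and $\mathfrak{B}_{10}\to\mathcal{L}_{21}$ explicitly). Without these repairs the variety is not exhausted by $\mathcal{C}_1\cup\mathcal{C}_2$ and the component count is not established.
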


\begin{proof}

From the proof of Theorem \ref{geo1}, we obtain that there are degenerations from the algebras $\mathcal{L}_{19}^{\alpha}$ and $\mathcal{L}_{29}$ to the algebras $\mathcal{L}_{04}, $ $\mathcal{L}_{05}, $ $\mathcal{L}_{06}, $ $\mathcal{L}_{07},$ $ \mathcal{L}_{08}, $ $\mathcal{L}_{09},$ $ \mathcal{L}_{10}, $ $\mathcal{L}_{11}, $ $\mathcal{L}_{12}, $ $\mathcal{L}_{13},$ $ \mathcal{L}_{14},$ $ \mathcal{L}_{15},$ $ \mathcal{L}_{16}^{\alpha},$ $ \mathcal{L}_{17}^{\alpha}, $ $\mathcal{L}_{18}^{\alpha},$ $ \mathcal{L}_{22}, $ $\mathcal{L}_{23}^{\alpha}, $ $\mathcal{L}_{24},$ $\mathcal{L}_{25},$ $
\mathcal{L}_{26}, $ $\mathcal{L}_{27}, $ $\mathcal{L}_{28}.$

After carefully  checking  the dimensions of orbit closures of the more important  algebras, we have

\begin{center}
$\dim  O({\mathfrak B}_{06}^{\alpha})=15$ \ and \ $\dim  O({\mathfrak B}_{10})=14.$
\end{center}

The following degenerations are observed:

\begin{longtable}{lcl} \hline

 $\mathfrak{B}_{10}$ & $\xrightarrow{ ( te_1, e_2, te_3, e_4)}$ & $\mathcal{L}_{01}$ \\\hline
   $\mathfrak{B}_{10}$ & $\xrightarrow{ (t^{-1}e_1, t^4e_2, t^2e_3, e_4)}$ & $\mathcal{L}_{02}$
\\  \hline

 $\mathfrak{B}_{10}$ & $\xrightarrow{ (e_1,te_2, te_3, e_4)}$ & $\mathcal{L}_{03}$ \\\hline

 $\mathfrak{B}_{06}^{\frac{1}{1+\alpha}}$ & $\xrightarrow{ (-(1+\alpha)e_1, -(1+\alpha)e_2, \frac{1}{t}e_3+\frac{1}{(1+\alpha)t^2}e_4, -\frac{1+\alpha}{t}e_4)}$ & $\mathcal{L}_{19}^{\alpha\neq -1}$

 \\  \hline

 $\mathfrak{B}_{10}$ & $\xrightarrow{ (te_1+t(\alpha+t^2) e_3, te_2, t^2e_3+t^4e_4, t^3e_4)}$ & $\mathcal{L}_{20}^{\alpha}$

\\  \hline

$\mathfrak{B}_{10}$ & $\xrightarrow{ (te_2+te_3, 2te_1+2te_3, -2t^2e_3-2t^2e_4, -4t^3e_4)}$ & $\mathcal{L}_{21}$ \\  \hline

 $\mathfrak{B}_{06}^{\frac{t+1}{t}}$ & $\xrightarrow{ (te_1, te_2, t^2e_3, t^3e_4)}$ & $\mathcal{L}_{29}$

\\  \hline

$\mathfrak{B}_{10}$ & $\xrightarrow{ (t^{-1}e_1, te_2, t^{-1}e_3, t^{-2}e_4)}$ & $\mathcal{L}_{30}$
 \\\hline
 $\mathfrak{B}_{10}$ & $\xrightarrow{ (t^{-1}e_1, t^{-1}e_2, t^{-3}e_3, t^{-5}e_4)}$ & $\mathcal{L}_{31}$
\\  \hline

$\mathfrak{B}_{10}$ & $\xrightarrow{ (t^{-1}e_1-t^{-4}e_3, t^{-1}e_2, t^{-3}e_3-t^{-6}e_4, t^{-5}e_4)}$ & $\mathcal{L}_{32}$ \\\hline
$\mathfrak{B}_{10}$ & $\xrightarrow{ (t^{-1}e_1, e_2, t^{-2}e_3, t^{-3}e_4)}$ & $\mathcal{L}_{33}$

\\  \hline

$\mathfrak{B}_{10}$ & $\xrightarrow{ (t^{-1}e_1-t^{-3}e_3, e_2, t^{-2}e_3-t^{-4}e_4, t^{-3}e_4)}$  &$\mathcal{L}_{34}$\\ \hline

$\mathfrak{B}_{10}$ & $\xrightarrow{ (\frac{1}{t}e_1+\frac{1}{t}e_2, e_2+\frac{1}{2t^3}e_3, \frac{1}{t^2}e_3+\frac{1}{2t^5}e_4, \frac{1}{t^4}e_4)}$ & $\mathcal{L}_{35}$

 \\  \hline

$\mathfrak{B}_{10}$ & $\xrightarrow{ (\frac{1}{t}e_1+e_2+\frac{1-t}{2t^4}e_3,te_2+\frac{1}{2t^2}e_3, \frac{1}{t}e_3+\frac{1}{2t^4}e_4, \frac{1}{t^2}e_4)}$ & $\mathcal{L}_{36}$

 \\  \hline

$\mathfrak{B}_{10}$ & $\xrightarrow{ (\frac{1}{t}e_1+\frac{1}{t}e_2-\frac{1}{t^4}e_3,e_2+\frac{1}{2t^3}e_3, \frac{1}{t^2}e_3-\frac{1}{2t^5}e_4, \frac{1}{t^4}e_4)}$ & $\mathcal{L}_{37}$

 \\  \hline

$\mathfrak{B}_{10}$ & $\xrightarrow{ (\frac{1}{t}e_1+\frac{1}{t}e_2, e_2, \frac{1}{t^2}e_3, \frac{1}{t^4}e_4)}$ & $\mathcal{L}_{38}$\\

\hline

$\mathfrak{B}_{10}$ & $\xrightarrow{ (\frac{1}{t}e_1+\frac{1}{t}e_2-\frac{1}{t^4}e_3,e_2, \frac{1}{t^2}e_3-\frac{1}{t^5}e_4, \frac{1}{t^4}e_4)}$ & $\mathcal{L}_{39}$
 \\  \hline

 $\mathfrak{B}_{10}$ & $\xrightarrow{ (\frac{1}{t}e_1+e_2, te_2, \frac{1}{t}e_3, \frac{1}{t^2}e_4)}$ &$\mathcal{L}_{40}$ \\
   \hline

$\mathfrak{B}_{10}$ & $\xrightarrow{ (\frac{1}{t}e_1+\frac{1}{2t^4}e_3, te_2+\frac{1}{2t^2}e_3, \frac{1}{t}e_3+\frac{1}{2t^4}e_4, \frac{1}{t^2}e_4)}$ & $\mathcal{L}_{41}$\\

\hline $\mathfrak{B}_{10}$ & $\xrightarrow{ (e_1+\alpha e_2, te_2, te_3, te_4)}$ & $\mathcal{L}_{45}$

\\  \hline

$\mathfrak{B}_{10}$ & $\xrightarrow{ (e_1+e_2-\alpha e_3, te_2, te_3-\alpha te_4, te_4)}$ & $\mathcal{L}_{46}^{\alpha\neq0}$

\\\hline

$\mathfrak{B}_{10}$ & $\xrightarrow{ (e_1+\alpha e_2+\frac{1-\alpha}{2t}e_3, te_2+\frac{1}{2}e_3, te_3+\frac{1}{2}e_4, te_4)}$ & $\mathcal{L}_{47}$

 \\  \hline

$\mathfrak{B}_{10}$ & $\xrightarrow{ (e_1+t^{-1}e_2, te_2, te_3, e_4)}$ & $\mathcal{L}_{48}$

\\  \hline

$\mathfrak{B}_{10}$ & $\xrightarrow{ (e_1+\frac{1}{t}e_2+\frac{t-1}{2t^2}e_3, e_2+\frac{1}{2t}e_3, e_3+\frac{1}{2t}e_4, \frac{1}{t}e_4)}$ & $\mathcal{L}_{49}$

 \\  \hline

$\mathfrak{B}_{10}$ & $\xrightarrow{ (\frac{1}{t}e_1+\frac{1}{2t^4}e_3, e_2+\frac{1}{2t^3}e_3, \frac{1}{t^2}e_3+\frac{1}{2t^5}e_4, \frac{1}{t^4}e_4)}$ & $\mathcal{L}_{50}$

 \\  \hline

$\mathfrak{B}_{10}$ & $\xrightarrow{ (e_1+t^{-1}e_3, e_2+t^{-1}e_3, e_3+t^{-1}e_4, 2t^{-1}e_4)}$ & $\mathcal{L}_{52}$

\\  \hline

$\mathfrak{B}_{10}$ & $\xrightarrow{ (t^{-1}e_1+t^{-4}e_3, t^5e_2, t^3e_3+e_4, -te_4)}$ & $\mathcal{L}_{51}$ \\\hline
$\mathfrak{B}_{02}$ & $\xrightarrow{ (t^{-1}e_1, t^
{-1}e_2, t^{-2}e_3, t^{-4}e_4)}$ & $\mathfrak{B}_{01}$

\\  \hline

$\mathfrak{B}_{06}^{\frac{1+2t}{2}}$ & $\xrightarrow{ (-2te_2, -e_1+e_2, -2te_3, 4t^2e_4)}$ & $\mathfrak{B}_{02}$ \\\hline

$\mathfrak{B}_{02}$ & $\xrightarrow{ (t^{-1}e_1, t^
{-2}e_1+t^{-1}e_2, t^{-2}e_3, t^{-4}e_4)}$ & $\mathfrak{B}_{03}$

\\  \hline

$\mathfrak{B}_{06}^{\alpha} $ & $\xrightarrow{ (t^{-1}e_1, t^{-1}e_2, t^{-2}e_3, t^{-4}e_4)}$ & $\mathfrak{B}_{04}^{\alpha}$ \\ \hline
$\mathfrak{B}_{06}^{\alpha} $ & $\xrightarrow{ (t^{-1}e_1, e_2, t^{-1}e_3, t^{-2}e_4)}$ & $\mathfrak{B}_{05}^{\alpha}$ \\ \hline

$\mathfrak{B}_{06}^{2+t}$ & $\xrightarrow{ (\frac{1}{t}e_1-\frac{1}{\alpha t^2}e_3, \frac{1}{\alpha}e_2, \frac{1}{\alpha t}e_3+\frac{1}{\alpha^2t^2}e_4, \frac{1}{\alpha^2t^2}e_4)}$ & $\mathfrak{B}_{07}^{\alpha\neq0}$

\\  \hline

$\mathfrak{B}_{06}^{2+t}$ & $\xrightarrow{ (e_1-\frac{1}{\alpha t}e_3, \frac{1}{\alpha}e_2, \frac{1}{\alpha}e_3+\frac{1}{\alpha^2t}e_4, \frac{1}{\alpha^2}e_4)}$ & $\mathfrak{B}_{08}^{\alpha\neq0}$

\\  \hline
$\mathfrak{B}_{10}$ & $\xrightarrow{ (e_1, t^{-1}e_2, t^{-1}e_3, t^{-2}e_4)}$ & $\mathfrak{B}_{09}$

\\  \hline

\end{longtable}

Below we list all important reasons for necessary non-degeneration.

\begin{longtable}{lcl|l}
\hline
    \multicolumn{4}{c}{Non-degenerations reasons} \\
\hline

$\mathfrak{B}_{06}^{\alpha}$ & $\not \rightarrow  $ &

$\mathfrak{B}_{10}$
&
$\mathcal R=\left\{
c_{1,2,1}^{3}=0, \ c_{2,3,2}^{4}=0
\right\}
$\\
\hline
\end{longtable}
Here, the coefficients $c_{i,j}^{k}$ and $c_{i,j,k}^{l}$  are structural constants in the basis $\{e_1,e_2,e_3,e_4\}$.
\end{proof}

\subsection{The geometric classification of  compatible Lie algebras}



In this subsection, we determine all the irreducible components of the variety of three and four-dimensional nilpotent compatible Lie algebras.

The notions of orbit closure, degeneration, and irreducible components for algebras with two bilinear multiplications are defined analogously to those in Section \ref{refff}; see, for example, \cite{PreLie}. The following results establish that the varieties of three- and four-dimensional nilpotent compatible Lie algebras each consist of a single irreducible component

\begin{proposition}
The variety of three-dimensional nilpotent compatible Lie algebras
is irreducible and defined by
$\mathcal{C}=\overline{O(\mathfrak{L}_{2}^{\alpha})}$, and the dimension of this variety is equal to $4$.

\end{proposition}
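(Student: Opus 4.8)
The plan is to show that the variety of three-dimensional nilpotent compatible Lie algebras has exactly one irreducible component, namely $\overline{O(\mathfrak{L}_2^\alpha)}$, and to compute its dimension. The classification theorem (see \cite{Comp}) tells us that there are exactly two isomorphism classes: the single algebra $\mathfrak{L}_1$ (with $\{e_1,e_2\}=e_3$ only) and the one-parameter family $\mathfrak{L}_2^\alpha$ (with $[e_1,e_2]=e_3$, $\{e_1,e_2\}=\alpha e_3$). Since there are only finitely many orbits together with one rational family, the irreducible components are governed by degenerations among these, and the whole strategy reduces to two tasks: first, exhibiting that the family $\mathfrak{L}_2^\alpha$ degenerates to $\mathfrak{L}_1$, so that $\mathfrak{L}_1$ lies in the closure of the family's orbits and therefore does not contribute a separate component; and second, computing $\dim O(\mathfrak{L}_2^\alpha)$ to obtain the dimension of the component.

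First I would establish the degeneration $\mathfrak{L}_2^* \to \mathfrak{L}_1$. Here I expect to use a parametrized index together with a parametrized basis, exactly as set up for the Lie-Yamaguti case in Section \ref{refff} (the analogue carries over verbatim for pairs of bilinear multiplications). The natural guess is to send the parameter $\alpha \to 0$ while rescaling: taking $f(t)=t$ (so $\alpha=t$) and the change of basis $E_1(t)=e_1$, $E_2(t)=e_2$, $E_3(t)=t^{-1}e_3$ (or a similarly chosen scaling), the bracket $[\,\cdot\,,\cdot\,]$ of $\mathfrak{L}_2^{t}$ should scale away in the limit $t\to 0$ while the symmetric product $\{\cdot,\cdot\}$ survives and produces $\{e_1,e_2\}=e_3$. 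I would verify that the resulting structure constants converge to those of $\mathfrak{L}_1$. This shows $\overline{O(\mathfrak{L}_1)} \subseteq \overline{\{O(\mathfrak{L}_2^\alpha)\}_{\alpha}}$, so $\mathfrak{L}_1$ is not a component.

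Next I would compute the dimension of the component as $\dim O(\mathfrak{L}_2^\alpha) = n^2 - \dim \mathfrak{Der}(\mathfrak{L}_2^\alpha)$ with $n=3$, using the derivation identities from the first Definition (a single $\mathfrak D$ must derive both $[\,\cdot\,,\cdot\,]$ and $\{\cdot,\cdot\}$). A direct computation of the space of common derivations of $\mathfrak{L}_2^\alpha$ should give $\dim \mathfrak{Der} = 5$, yielding $\dim O(\mathfrak{L}_2^\alpha) = 9 - 5 = 4$, matching the claimed dimension. Finally, to conclude irreducibility I would argue that since $\mathfrak{L}_2^\alpha$ is the only family and $\mathfrak{L}_1$ degenerates from it, the union of orbit closures is $\overline{\{O(\mathfrak{L}_2^\alpha)\}_\alpha}$, and this is irreducible because it is the closure of the image of an irreducible parameter set $\mathbb{C}\times \mathrm{GL}_3(\mathbb{C})$ under the (continuous) orbit map; a rational one-parameter family of orbits of fixed dimension sweeps out an irreducible set, whose closure is therefore a single irreducible component.

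The main obstacle I anticipate is purely computational rather than conceptual: finding the correct explicit parametrized basis and index realizing $\mathfrak{L}_2^* \to \mathfrak{L}_1$, and separately checking there is no spurious degeneration obstruction via the stable-Zariski-closed-set criterion of Lemma (the analogue of Remark \ref{redbil}). Because one multiplication is symmetric and the other antisymmetric, I would double-check that the degeneration respects both components simultaneously, i.e.\ that the chosen scaling does not accidentally kill $\{\cdot,\cdot\}$ as well; the subtlety is balancing the $t$-powers so that exactly the antisymmetric bracket vanishes. Once the degeneration and the derivation dimension are verified, the irreducibility and the dimension count follow immediately.
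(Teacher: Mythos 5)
Your overall strategy (exhibit a degeneration from the family $\mathfrak{L}_2^\alpha$ to $\mathfrak{L}_1$, compute the dimension, conclude irreducibility from the classification) is exactly the paper's, but two of your concrete steps fail. First, the degeneration as you propose it cannot work: you take the parametrized index $f(t)=t$, i.e.\ $\alpha\to 0$. But in $\mathfrak{L}_2^\alpha$ one has $\{x,y\}=\alpha\,[x,y]$ \emph{as bilinear maps}, and this proportionality is preserved by every change of basis: for any $g\in\mathrm{GL}_3(\mathbb{C})$, the structure constants of $g*\{\cdot,\cdot\}$ are exactly $\alpha$ times those of $g*[\cdot,\cdot]$. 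Hence along $\alpha=t\to 0$, if the structure constants of the first product converge (which the definition of degeneration requires), those of the second product converge to $0$, so \emph{every} limit has zero second product --- no choice of basis scaling can produce $\mathfrak{L}_1$, whose second product is nonzero. This is not the ``balancing of $t$-powers'' issue you flagged; the direction $\alpha\to 0$ is structurally hopeless. The correct move is the opposite one: take $\alpha=t^{-1}$ (so $\alpha\to\infty$) with basis $E_1=te_1,\ E_2=e_2,\ E_3=e_3$; then $[E_1,E_2]=tE_3\to 0$ while $\{E_1,E_2\}=t\cdot t^{-1}e_3=E_3$, which is precisely the paper's degeneration $\mathfrak{L}_{2}^{t^{-1}}\xrightarrow{(te_1,e_2,e_3)}\mathfrak{L}_1$.

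Second, your dimension count is wrong, even though it lands on the right number. Since $\{\cdot,\cdot\}=\alpha[\cdot,\cdot]$, any derivation of the Heisenberg bracket $[e_1,e_2]=e_3$ automatically satisfies the Leibniz rule for the second product, so $\mathfrak{Der}(\mathfrak{L}_2^\alpha)=\mathfrak{Der}(\mathfrak{h}_3)$, which is $6$-dimensional, not $5$-dimensional. Consequently each individual orbit satisfies $\dim O(\mathfrak{L}_2^\alpha)=9-6=3$, and no single orbit has dimension $4$. The dimension $4$ in the statement is the dimension of the closure of the \emph{union} $\bigcup_\alpha O(\mathfrak{L}_2^\alpha)$ (this is what the paper's notation $\overline{O(\mathfrak{L}_2^\alpha)}$ means), which equals $3+1$: three from a generic orbit plus one from the parameter $\alpha$, the fibers of the map $\mathbb{C}\times\mathrm{GL}_3(\mathbb{C})\to\mathcal{T}_3$ being $6$-dimensional because distinct $\alpha$ give non-isomorphic algebras. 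Your closing irreducibility argument (closure of the image of the irreducible set $\mathbb{C}\times\mathrm{GL}_3(\mathbb{C})$) is fine and is the correct way to see that this closure is a single component; but as written your proof both fails at its key degeneration step and misattributes where the fourth dimension comes from.
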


\begin{proof}

After carefully  checking  the dimensions of orbit closures of three-dimensional nilpotent compatible Lie algebras, we have
$\dim  O(\mathfrak{L}_{2}^{\alpha})=4.$
It is not difficult to verify the degeneration
$\mathfrak{L}_{2}^{t^{-1}}\ \xrightarrow{ (te_1, e_2, e_3)} \ \mathfrak{L}_{1}.$
Therefore, we get that $\mathcal{C}=\overline{O(\mathfrak{L}_{2}^{\alpha})}.$
\end{proof}

\begin{theorem}\label{geo4}
The variety of four-dimensional nilpotent
compatible Lie algebras
is irreducible and defined by
$\mathcal{C}=\overline{O({L_{10}^{\alpha})}}$, and the dimension of this variety is equal to $13$.

\end{theorem}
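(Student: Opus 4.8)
The plan is to show that the single one-parameter family $L_{10}^{\alpha}$ dominates the entire variety of four-dimensional nilpotent compatible Lie algebras, so that $\overline{O(L_{10}^{\alpha})}$ is the unique irreducible component. First I would compute $\dim \mathfrak{Der}(L_{10}^{\alpha})$ and hence $\dim O(L_{10}^{\alpha}) = 16 - \dim \mathfrak{Der}(L_{10}^{\alpha})$, expecting this to equal $13$, matching the asserted dimension of the variety. Since the variety has only finitely many orbits up to the parameters (the ten families $L_{01},\dots,L_{10}^{\alpha}$ from the algebraic classification), it suffices to exhibit, for each of the other nine algebras $L_{01}, L_{02}^{\alpha}, \dots, L_{09}^{\alpha,\beta}$, an explicit degeneration from the family $L_{10}^{\alpha}$ (or from a suitable specialization of it). This is the same strategy used in Theorems \ref{geo1} and \ref{geo2}: produce a parametrized basis $E_1(t),\dots,E_4(t)$ and, for the two-parameter target $L_{09}^{\alpha,\beta}$, also a parametrized index $f(t)$ feeding the source parameter, so that the structure constants of $L_{10}^{f(t)}$ in the moving basis converge as $t\to 0$ to those of the target.

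The key technical steps, carried out in order, are: (i) confirm $\dim O(L_{10}^{\alpha})=13$ via the derivation computation; (ii) tabulate degenerations $L_{10}^{\alpha} \to L_{j}$ for every remaining algebra in the list, checking each by passing both bilinear structures $[-,-]$ and $\{-,-\}$ through the action $(g*\mu)(x,y)=g\mu(g^{-1}x,g^{-1}y)$ and verifying the limits exist; and (iii) conclude by the criterion stated just before Theorem \ref{geo1}, namely that $\overline{L'}$ is an irreducible component precisely when no algebra and no family degenerates to $L'$, so once every other algebra lies in $\overline{O(L_{10}^{\alpha})}$ the variety is irreducible with this single component. Because $L_{10}^{\alpha}$ itself is a family, I would also note that no degeneration into $L_{10}^{\alpha}$ is needed; it is the top of the hierarchy by dimension count.

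The main obstacle I anticipate is constructing the correct parametrized bases for the degenerations into the genuinely two-operation targets, especially $L_{09}^{\alpha,\beta}$ and $L_{10}^{\alpha}$ itself as a family: unlike the single-multiplication case, the chosen $g(t)$ must simultaneously send the Lie bracket $[-,-]$ and the compatible bracket $\{-,-\}$ to their target values, and it is easy for a substitution that works for one bracket to blow up or mis-scale the other. The two-parameter target $L_{09}^{\alpha,\beta}$ is the delicate one, since I must choose $f(t)$ so that the source parameter $\alpha$ in $L_{10}^{f(t)}$ limits correctly while a diagonal-plus-nilpotent change of basis produces the independent coefficients $\alpha$ and $\beta$ on $e_3$ and $e_4$ in the limit. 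Verifying that each candidate basis is invertible for all $t\in\mathbb{C}^*$ and that all structure-constant limits exist (no negative powers of $t$ survive) is the routine but error-prone part; once the table is assembled the conclusion via the irreducibility criterion is immediate.
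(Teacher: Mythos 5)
Your strategy coincides with the paper's proof: check the orbit dimension of the family $L_{10}^{\alpha}$, exhibit an explicit degeneration (with a parametrized basis and, where needed, a parametrized index such as $L_{10}^{t^{-1}}$ or the specialization $L_{10}^{0}$) onto each of $L_{01},L_{02}^{\alpha},\dots,L_{09}^{\alpha,\beta}$, and conclude that the whole variety equals $\overline{O(L_{10}^{\alpha})}$, hence is irreducible of dimension $13$. The paper's proof is exactly this table of nine degenerations together with the dimension check, so in approach your proposal is the same.

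One concrete correction to your step (i): the formula $n^2-\dim\mathfrak{Der}$ gives the dimension of the orbit of a \emph{single} algebra, and for $L_{10}^{\alpha}$ with fixed $\alpha$ one computes $\dim\mathfrak{Der}(L_{10}^{\alpha})=4$ (the derivations are determined by $a_1=b_2$, $a_3$, $a_4$, $b_4$, with $a_2=b_1=0$ and $b_3=-\alpha a_3$ forced), so a single orbit has dimension $16-4=12$, not $13$. The figure $13$ is the dimension of $\overline{\bigcup_{\alpha}O(L_{10}^{\alpha})}$, i.e.\ the orbit dimension plus one for the parameter $\alpha$; as literally stated, your expectation $16-\dim\mathfrak{Der}(L_{10}^{\alpha})=13$ would fail, and you need this family-versus-orbit distinction to get the dimension asserted in the theorem.
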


\begin{proof}

After carefully  checking  the dimensions of orbit closures of the more important  algebras, we have $\dim  O(L_{10}^{\alpha})=13.$


The following degenerations are observed:

\begin{longtable}{lcl} \hline
$L_{10}^{t^{-1}}$ & $\xrightarrow{ (  e_1, t^2e_2,  te_3 , e_4)}$ & $L_{01}$  \\\hline
$L_{10}^{\alpha}$ & $\xrightarrow{ ( t e_1, e_2, t e_3 , e_4)}$ & $L_{02}^{\alpha}$
\\  \hline

$L_{10}^{0}$ & $\xrightarrow{ (t e_1, t e_2, e_3, te_4)}$ & $L_{03}$ \\\hline
$L_{10}^{0}$ & $\xrightarrow{ (te_1, e_2, te_3, te_4)}$ & $L_{04}$

\\  \hline

$L_{10}^{t^{-1}}$ & $\xrightarrow{ ( te_2, e_1 + \alpha e_2,  -e_3,  -e_4)}$ & $L_{05}^{\alpha}$ \\\hline
$L_{10}^{t^{-1}}$ & $\xrightarrow{ (e_2, e_1, -t^{-1}e_3, -t^{-1}e_4)}$ & $L_{06}$

\\  \hline

$L_{10}^{t^{-1}}$ & $\xrightarrow{ ( t^2e_2, te_1-te_3, -t^2e_3, -t^3e_4)}$ & $L_{07}$ \\\hline
$L_{10}^{\alpha}$ & $\xrightarrow{ ( te_2, e_1, -te_3, -te_4)}$ & $L_{08}^{\alpha}$
\\  \hline

$L_{10}^{\alpha}$ & $\xrightarrow{ ( te_2,  \beta e_1+e_2  , -\beta t e_3 ,  - \beta t e_4)}$  &$L_{09}^{\alpha,\beta\neq0}$\\

 \hline

\end{longtable}

Hence, the variety of four-dimensional nilpotent compatible Lie algebras has a single irreducible component $\overline{O({L_{10}^{\alpha})}}.$
\end{proof}



{\small

}

\EditInfo{June 9, 2025}{August 2, 2025}{David Towers and Ivan Kaygorodov}


\begin{thebibliography}{10}
    
    \bibitem{LYC}
    H.~Abdelwahab, E.~Barreiro, A.~Calderón, and A.~Fernández~Ouaridi.
    \newblock The classification of nilpotent {L}ie-{Y}amaguti algebras.
    \newblock {\em Linear Algebra and its Applications}, 654:339--378, 2022.
    
    \bibitem{BC}
    H.~Abdelwahab and A.~Calderón.
    \newblock The classification of nilpotent {B}ol algebras.
    \newblock {\em Communications in Algebra}, 53(5):1--14, 2024.
    
    \bibitem{fkkv}
    H.~Abdelwahab, A.~Fern{á}ndez~Ouaridi, and I.~Kaygorodov.
    \newblock Degenerations of {P}oisson-type algebras.
    \newblock {\em Rendiconti del Circolo Matematico di Palermo}, 74(1):63, 2025.
    
    \bibitem{afm}
    H.~Abdelwahab, A.~Fern{á}ndez~Ouaridi, and C.~Mart{í}n~Gonz{á}lez.
    \newblock Degenerations of {P}oisson algebras.
    \newblock {\em Journal of Algebra and Its Applications}, 24(3):2550087, 2025.
    
    \bibitem{PreLie}
    H.~Abdelwahab, I.~Kaygorodov, and A.~Makhlouf.
    \newblock The algebraic and geometric classification of compatible pre-{L}ie
      algebras.
    \newblock {\em Symmetry, Integrability and Geometry: Methods and Applications},
      20(107):20, 2024.
    
    \bibitem{aae23}
    K.~Abdurasulov, J.~Adashev, and S.~Eshmeteva.
    \newblock Transposed Poisson structures on solvable {L}ie algebras with
      filiform nilradical.
    \newblock {\em Communications in Mathematics}, 32(3):441--483, 2024.
    
    \bibitem{b}
    A.~Ben~Hassine, T.~Chtioui, M.~Elhamdadi, and S.~Mabrouk.
    \newblock Cohomology and deformations of left-symmetric {R}inehart algebras.
    \newblock {\em Communications in Mathematics}, 32(2):127--152, 2024.
    
    \bibitem{Boldef}
    G.~Bol.
    \newblock Gewebe und gruppen.
    \newblock {\em Mathematische Annalen}, 114(1):414–431, 1937.
    
    \bibitem{BC99}
    D.~Burde and C.~Steinhoff.
    \newblock Classification of orbit closures of $4$-dimensional complex {L}ie
      algebras.
    \newblock {\em Journal of Algebra}, 214(2):729--739, 1999.
    
    \bibitem{MF}
    M.~Fairon.
    \newblock Modified double brackets and a conjecture of {S}. {A}rthamonov.
    \newblock {\em Communications in Mathematics}, 33(3):5, 2025.
    
    \bibitem{FKS}
    R.~Fehlberg~J\'{u}nior, I.~Kaygorodov, and A.~Saydaliyev.
    \newblock The geometric classification of symmetric {L}eibniz algebras.
    \newblock {\em Communications in Mathematics}, 33(1):10, 2025.
    
    \bibitem{Fill}
    V.~T. Filippov.
    \newblock Homogeneous {B}ol algebras.
    \newblock {\em Siberian Mathematical Journal}, 35(4):818--825, 1994.
    
    \bibitem{gabriel}
    P.~Gabriel.
    \newblock Finite representation type is open.
    \newblock {\em Proceedings of the International Conference on Representations
      of Algebras. Carleton University, Ottawa}, pages 132--155, 1974.
    
    \bibitem{ComL2}
    I.~Z. Golubchik and V.~V. Sokolov.
    \newblock Compatible {L}ie brackets and integrable equations of the principal
      chiral field model type.
    \newblock {\em Functional Analysis and Its Applications}, 36(3):172--181, 2002.
    
    \bibitem{ComL4}
    I.~Z. Golubchik and V.~V. Sokolov.
    \newblock Factorization of the loop algebras and compatible {L}ie brackets.
    \newblock {\em Journal of Nonlinear Mathematical Physics}, 12:343--350, 2005.
    
    \bibitem{ComL1}
    I.~Z. Golubchik and V.~V. Sokolov.
    \newblock Compatible {L}ie brackets and the {Y}ang--{B}axter equation.
    \newblock {\em Theoretical and Mathematical Physics}, 146(2):159--169, 2006.
    
    \bibitem{Goswami}
    S.~Goswami, S.~K. Mishra, and G.~Mukherjee.
    \newblock Automorphisms of extensions of {L}ie-{Y}amaguti algebras and
      inducibility problem.
    \newblock {\em Journal of Algebra}, 641:268--306, 2024.
    
    \bibitem{GRH}
    F.~Grunewald and J.~O'Halloran.
    \newblock Varieties of nilpotent {L}ie algebras of dimension less than six.
    \newblock {\em Journal of Algebra}, 112:315--325, 1988.
    
    \bibitem{Guo}
    {\relax Sh}.~Guo, B.~Mondal, and R.~Saha.
    \newblock On equivariant {L}ie-{Y}amaguti algebras and related structures.
    \newblock {\em Asian-European Journal of Mathematics}, 16:2350022, 2023.
    
    \bibitem{Irvin}
    I.~R. Hentzel and L.~A. Peresi.
    \newblock Special identities for {B}ol algebras.
    \newblock {\em Linear Algebra and its Applications}, 436:2315–2330, 2012.
    
    \bibitem{Jac}
    N.~Jacobson.
    \newblock {L}ie and Jordan triple systems.
    \newblock {\em American Journal of Mathematics}, 71(1):149–-170, 1949.
    
    \bibitem{k23}
    I.~Kaygorodov.
    \newblock Non-associative algebraic structures: classification and structure.
    \newblock {\em Communications in Mathematics}, 32(3):1--62, 2024.
    
    \bibitem{MS}
    I.~Kaygorodov, M.~Khrypchenko, and P.~P{á}ez-Guill{á}n.
    \newblock The geometric classification of non-associative algebras: a survey.
    \newblock {\em Communications in Mathematics}, 32(2):185--284, 2024.
    
    \bibitem{Khr}
    M.~Khrypchenko.
    \newblock $\sigma$-matching and interchangeable structures on certain
      associative algebras.
    \newblock {\em Communications in Mathematics}, 33(3):6, 2025.
    
    \bibitem{Kik}
    M.~Kikkawa.
    \newblock Geometry of homogeneous {L}ie loops.
    \newblock {\em Hiroshima Mathematical Journal}, 5:141--179, 1975.
    
    \bibitem{LY}
    M.~K. Kinyon and A.~Weinstein.
    \newblock {L}eibniz algebras, {C}ourant algebroids, and multiplications on
      reductive homogeneous spaces.
    \newblock {\em American Journal of Mathematics}, 123(3):525--550, 2001.
    
    \bibitem{Kosmann}
    Y.~Kosmann-Schwarzbach and F.~Magri.
    \newblock {P}oisson-{N}ijenhuis structures.
    \newblock {\em Annales de l'institut Henri Poincar{é} (A) 
      Physique th{é}orique}, 53(1):35--81, 1990.
    
    \bibitem{Kuzmin}
    E.~N. Kuz’min and O.~Za{\u\i}di.
    \newblock Solvable and semisimple {B}ol algebras.
    \newblock {\em Algebra Logic}, 32:361–-371, 1993.
    
    \bibitem{Comp}
    M.~Ladra, B.~Leite~da Cunha, and S.~A. Lopes.
    \newblock A classification of nilpotent compatible {L}ie algebras.
    \newblock {\em Rendiconti del Circolo Matematico di Palermo Series 2}, 70(74),
      2025.
    
    \bibitem{lsb20}
    J.~Liu, Y.~Sheng, and C.~Bai.
    \newblock $F$-manifold algebras and deformation quantization via pre-{L}ie
      algebras.
    \newblock {\em Journal of Algebra}, 559:467--495, 2020.
    
    \bibitem{l24}
    S.~Lopes.
    \newblock Noncommutative algebra and representation theory: symmetry, structure
      \& invariants.
    \newblock {\em Communications in Mathematics}, 32(2):63--117, 2024.
    
    \bibitem{maz79}
    G.~Mazzola.
    \newblock The algebraic and geometric classification of associative algebras of
      dimension five.
    \newblock {\em Manuscripta Mathematica}, 27(1):81--101, 1979.
    
    \bibitem{Mikheev}
    P.~O. Mikheev.
    \newblock Commutator algebras of right alternative algebras, in: Quasigroups
      and their systems.
    \newblock {\em Matematicheskie Issledovaniya}, 113:62--65, 1990.
    
    \bibitem{Nomizu}
    K.~Nomizu.
    \newblock Invariant affine connections on homogeneous spaces.
    \newblock {\em American Journal of Mathematics}, 76:33--65, 1954.
    
    \bibitem{Coml3}
    A.~V. Odesski and V.~V. Sokolov.
    \newblock Compatible {L}ie brackets related to elliptic curve.
    \newblock {\em Journal of Mathematical Physics}, 47(1):013506, 2006.
    
    \bibitem{Perez}
    J.~M. P{é}rez-Izquierdo.
    \newblock An envelope for {B}ol algebras.
    \newblock {\em Journal of Algebra}, 284:480--493, 2005.
    
    \bibitem{Maufang}
    L.~V. Sabinin and P.~O. Mikheev.
    \newblock {\em The theory of smooth {B}ol loops.(Lectures notes)}.
    \newblock Lectures, Univ. Druzhby Narodov, Moscow, 1985.
    
    \bibitem{Sagle2}
    A.~Sagle.
    \newblock On anti-commutative algebras and general {L}ie triple systems.
    \newblock {\em Pacific Journal of Mathematics}, 15:281--291, 1965.
    
    \bibitem{Sagle1}
    A.~Sagle.
    \newblock On simple algebras obtained from homogeneous general {L}ie triple
      systems.
    \newblock {\em Pacific Journal of Mathematics}, 15:1397--1400, 1965.
    
    \bibitem{kms}
    B.~Sartayev.
    \newblock Some generalizations of the variety of transposed Poisson algebras.
    \newblock {\em Communications in Mathematics}, 32(2):55--62, 2024.
    
    \bibitem{LYI}
    K.~Yamaguti.
    \newblock On the {L}ie triple system and its generalization.
    \newblock {\em Journal of Science of the Hiroshima University, Series A},
      21:2212--2231, 1957/1958.
    
    \bibitem{Zhang}
    T.~Zhang and J.~Li.
    \newblock Deformations and extensions of {L}ie-{Y}amaguti algebras.
    \newblock {\em Linear and
      multilinear algebra},
      63(11):2212--2231, 2015.
    
    \end{thebibliography}
\end{document}